\documentclass[12pt,a4paper,reqno]{amsart}
\usepackage[english]{babel}
\usepackage{amsmath}
\usepackage{amsfonts}
\usepackage{amsthm}
\usepackage{amssymb}
\usepackage{epsfig}
\usepackage{graphicx}
\usepackage{times}

\usepackage{latexsym}
\usepackage{xspace}

\usepackage[colorlinks=true,linkcolor=red,citecolor=blue]{hyperref}
\usepackage{accents}
\selectlanguage{english}

\textwidth=430pt
\hoffset=-40pt

\theoremstyle{plain}

\newtheorem{teo}{Theorem}[section]
\newtheorem*{ghys}{Theorem (Ghys)}
\newtheorem{pro}[teo]{Proposition}
\newtheorem{lem}[teo]{Lemma}
\newtheorem{cor}[teo]{Corollary}

\theoremstyle{definition}
\newtheorem{defi}[teo]{Definition}

\numberwithin{equation}{teo}

\theoremstyle{remark}
\newtheorem{rem}[teo]{Remark}
\newtheorem*{example*}{Example}

\begin{document}

\title[Nilpotent actions on the torus]
{Finite orbits for nilpotent actions on the torus}

\author{S. Firmo and J. Rib\'on}
\address{Instituto de Matem\'{a}tica e Estat\'\i stica \\
Universidade Federal Fluminense, Rua M\'{a}rio Santos Braga s/n -
Valonguinho, 24020\,-\,140 Niter\'{o}i, Rio de Janeiro - Brasil }
\email{firmo@mat.uff.br}
\email{javier@mat.uff.br}
\subjclass{Primary:  37E30, 37E45, 37A15, 37A05, 54H20 \ ; \ Secondary: 55M20, 37C25}

\thanks{Keywords: rotation vector, global fixed point, derived group, homeomorphism, diffeomorphism,  nilpotent group, Lefschetz number, finite orbit}
\date{\today}
\thanks{Supported in part by CAPES}

\thispagestyle{empty}

\begin{abstract}

A homeomorphism of the \,$2$-torus with Lefschetz number different from zero has a fixed point. We give a
version of this result for nilpotent groups of diffeomorphisms. We prove that a  nilpotent group of \,$2$-torus
 diffeomorphims has finite orbits when the group has some element with Lefschetz number different from zero.

\end{abstract}

\maketitle

\thispagestyle{empty}

\vskip20pt
\section{Introduction}

 Abelian groups of isotopic to the identity  \,$C^{1}$-\,diffeomorphisms
of  closed orientable surfaces  different from the \,$2$-sphere and the \,$2$-torus 
have global fixed points \cite[Franks-Handel-Parwani]{fhp02}, i.e. there exists a common
fixed point for all elements of the group. They also prove that if
the surface is the \,$2$-sphere then such groups have  finite orbits with at most two elements
\cite{fhp01}. This result was generalized to nilpotent groups by the second
author \cite{JR:arxivsp}.
The situation is different for the \,$2$-torus ${\mathbb T}^{2}$ 
since there are isotopic to the identity diffeomorphisms  with no 
finite orbits. For instance consider the diffeomorphism \,$\tilde{\phi}: {\mathbb R}^{2} \to {\mathbb R}^{2}$\,
defined by \,$\tilde{\phi} (x,y) = (x + \sqrt{2},y)$. It induces a holomorphic diffeomorphism 
\,$\phi: {\mathbb R}^{2}/{\mathbb Z}^{2} \to {\mathbb R}^{2}/{\mathbb Z}^{2}$\, without finite orbits.

In this context  when the surface is different from the \,$2$-torus the existence of global fixed points 
or finite orbits is essentially imposed by the surface topology
and the dynamics of the nilpotent (or abelian)  relation on the group.
Several papers have focused on this issue as 
\cite{elon02,pl01} for abelian and nilpotent connected Lie groups respectively, 
\cite{bo01,bo02,han01,fir01,fhp01,fhp02,fir03} for abelian groups and
\cite{fir02,JR:arxivsp,rib01} for the nilpotent ones.

In order to find finite orbits for the \,$2$-torus \,$\mathbb{T}^{2}$\, we need to impose more conditions 
on the groups other than the nilpotent property.
The conditions can be either of topological nature or of more dynamical type.
In this article we obtain finite orbits through the former approach.
The latter point of view
is studied in the forthcoming article
\,{\it Global fixed points for nilpotent actions on the torus} (cf. \cite{fr01}).

A natural topological condition on
\,$G \subset \mathrm{Diff}^{1}(\mathbb{T}^{2})$\, to obtain finite orbits is the existence of some element
in \,$G$\, whose Lefschetz number is different from zero.
Such property plays the role of a rigidity condition on the nilpotent group $G$.
It allows to show the following result.

\vskip10pt
\begin{teo}
\label{teo:main5}
Let \,$G$\, be a nilpotent subgroup of \,$\mathrm{Diff}^{1}({\mathbb T}^{2})$\,.
If \,$G$\, has some element whose  Lefschetz number
is different from zero
then \,$G$\, has a finite orbit.
\end{teo}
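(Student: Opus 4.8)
The plan is to read the hypothesis off homologically and then exploit nilpotency to annihilate the ``translation part'' of the action. Write $\rho\colon G\to GL_2(\mathbb{Z})$ for the action on $H_1(\mathbb{T}^2;\mathbb{Z})\cong\mathbb{Z}^2$, let $N=\rho(G)$, and let $A=\rho(g)$ for the distinguished element with $L(g)\neq 0$. Since the $G$-saturation of a finite orbit of a finite index subgroup is again finite, I may pass to the orientation preserving subgroup $G^{+}\leq G$ of index at most $2$; replacing $g$ by $g^{2}$ if necessary I assume $g\in G^{+}$, and then $L(g)=2-\mathrm{tr}(A)$, so that $L(g)\neq 0$ is equivalent to $1\notin\mathrm{spec}(A)$, i.e.\ to the invertibility of $A-I$. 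Equivalently, $g$ fixes no nonzero homology class.

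First I would isolate the normal subgroup $G_0=\ker\rho$, consisting of the elements acting trivially on homology, hence isotopic to the identity. The core of the argument is a commutator computation on rotation vectors. For $f\in G_0$ choose a lift $\tilde f$ commuting with the deck translations; a lift $\tilde g$ satisfies $\tilde g(x+v)=\tilde g(x)+Av$, so conjugation by $\tilde g$ sends the rotation vector of $f$ to $A\cdot\mathrm{rot}(f)$. Reading this at the first derived level gives $\mathrm{rot}([g,f])=(A-I)\,\mathrm{rot}(f)$, and iterating along the lower central series yields $\mathrm{rot}(c_k)=(A-I)^{k}\,\mathrm{rot}(f)$ for the $k$-fold commutators $c_k=[g,[g,\dots,[g,f]]]$. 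Because $G$ is nilpotent these commutators vanish for large $k$, and because $A-I$ is invertible this forces $\mathrm{rot}(f)=0$ for every $f\in G_0$.

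With trivial rotation vectors throughout $G_0$ I would produce a nonempty common fixed set: a group of $C^{1}$ torus diffeomorphisms isotopic to the identity with vanishing rotation should have a fixed point, the zero rotation vector being the obstruction that now disappears, which one extracts by a Brouwer/Lefschetz argument on the lifted dynamics. Set $\Phi=\mathrm{Fix}(G_0)\neq\varnothing$; since $G_0\trianglelefteq G$ the set $\Phi$ is compact and $G$-invariant, and the induced action of $G$ on $\Phi$ factors through $N$. If $N$ is finite, every $G$-orbit in $\Phi$ has at most $|N|$ points and we are done. If $N$ is infinite then $A$ is hyperbolic, and a short commutator argument in $GL_2(\mathbb{Z})$ (iterated commutators with the expanding/contracting $A$ force every element of $N$ to preserve the two irrational eigendirections of $A$) shows that $N$ is abelian and virtually cyclic; in this regime I would locate a finite orbit inside $\Phi$ by applying the Lefschetz fixed point theorem, through Nielsen classes, to $g$ restricted to the invariant continuum $\Phi$.

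The two delicate points are exactly these. The first, and main, obstacle is to make the rotation-vector calculus rigorous: for merely $C^{1}$ (in particular non-conservative) diffeomorphisms the rotation set of a single map is genuinely a set rather than a vector, so the additivity $\mathrm{rot}(fh)=\mathrm{rot}(f)+\mathrm{rot}(h)$ and the conjugation formula above must be extracted from the nilpotent relations themselves, for instance by working on a common invariant probability measure or by controlling displacements uniformly along the central series, rather than simply assumed. The second obstacle is the hyperbolic endgame, namely converting the nonzero Lefschetz number of $g$ into an honest finite orbit on $\Phi$ when $N$ is infinite, where the naive Lefschetz count lives on all of $\mathbb{T}^2$ and has to be transported to the invariant set $\Phi$.
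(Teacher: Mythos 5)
Your homological reading of the hypothesis and the commutator calculus on rotation vectors are exactly the paper's starting point (and the measure-theoretic fix you flag is also the paper's: work with $\rho_{\mu}$ for a single $G$-invariant Borel probability $\mu$, which exists because nilpotent groups are amenable, so that $\rho_{\mu}$ is a genuine group morphism on lifts). But the conclusion you extract, $\mathrm{rot}(f)=0$ for every $f\in G_{0}$, is false. The iterated commutator $c_{k}$ is eventually the identity of $G$, but the corresponding lift $[\tilde g,[\tilde g,\dots,[\tilde g,\tilde f]]]$ is then only an \emph{integer translation}, so the identity $(A-I)^{k}\,\mathrm{rot}(f)=\mathrm{rot}(c_{k})$ really reads $(A-I)^{k}\,\mathrm{rot}(f)\in\mathbb{Z}^{2}$: you get rationality with a bounded denominator, not vanishing. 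The paper's own example makes this concrete: take $G$ generated by the translations $T_{(a,b)}$ with $a,b\in\mathbb{Z}/2^{n}$ together with the involution $\phi$ induced by $-Id$; then $L(\phi)=4\neq0$, yet $G_{0}$ consists of nontrivial translations with nonzero rotation vectors and $\mathrm{Fix}(G_{0})=\emptyset$. So your set $\Phi$ can be empty and the second half of your plan collapses. The correct move is to define the irrotational subgroup $G_{\mathcal I}$ of those $f\in G_{0}$ admitting a lift with $\rho_{\mu}(\tilde f)=(0,0)$ and to prove it has \emph{finite index} in $G_{0}$, which is exactly what the denominator bound buys.

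Even after restricting to $G_{\mathcal I}$, your route to a common fixed point (``vanishing rotation should give a fixed point by a Brouwer/Lefschetz argument'') is not a proof, and for a whole infinite group it is the hardest part of the problem. The paper does not look for fixed points on the torus at all: because the rotation vectors vanish, $G_{\mathcal I}$ lifts isomorphically to a group $\tilde G_{\mathcal I}\subset\mathrm{Diff}^{1}_{+}(\mathbb{R}^{2})$; using the classification of nilpotent subgroups of $\mathrm{GL}(2,\mathbb{Z})$ (which replaces your hyperbolic endgame) one chooses an orientation-preserving $\psi$ with $L(\psi)\neq0$ whose class generates a finite-index subgroup of $[G]$, takes a lift $\tilde\psi$ with a fixed point (Lefschetz), observes that $\mathrm{Fix}(\tilde\psi)$ is compact precisely because $1\notin\mathrm{spec}[\psi]$, and then invokes Theorem 1.4 of the second author's earlier paper: a nilpotent subgroup of $\mathrm{Diff}^{1}_{+}(\mathbb{R}^{2})$ containing an element with non-empty compact fixed point set has a global fixed point. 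That external theorem is the engine of the whole argument and is where the $C^{1}$ hypothesis enters; nothing in your sketch substitutes for it. Once $\langle G_{\mathcal I},\psi\rangle$ is known to have a global fixed point and finite index in $G$, the finite-orbit bookkeeping you set up at the start does carry over verbatim.
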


 Here, \,$\mathrm{Diff}^{1}({\mathbb T}^{2})$\, denotes the set of
\,$C^{1}$-diffeomorphisms of the \,$2$-torus.

The Lefschetz fixed point theorem guarantees a fixed point for each element of
\,$G$\, whose Lefschetz number is different from zero.
But the existence of such an element
is not sufficient to guarantee a global fixed point  since for instance
\,$G$\, may have some elements without fixed points.
This is the case for  the abelian subgroup \,$G$\, of
\,$\mathrm{Diff}^{1}(\,\mathbb{S}^{1} \! \times \mathbb{S}^{1})$\, generated by the maps
$$\psi(z_{1},z_{2})=(\bar{z}_{1},\bar{z}_{2}) \quad \text{and} \quad
\phi(z_{1},z_{2})=(-z_{1},-z_{2}) \quad
\text{where} \quad  z_{1},z_{2} \in \mathbb{S}^{1}\subset \mathbb{C}.$$
In this example \,$\phi$\, has no fixed point  and the Lefschetz number of \,$\psi$\, is equal
to \,$4$.

 Theorem \ref{teo:main5} provides a natural version, in the torus \,$\mathbb{T}^{2}$\, and for the \,$C^{1}$-\,differentiability  class, of some result proved by Ghys for the 2-sphere in the analytic case. In \cite{ghys93} Ghys proves the following theorem.

\vskip10pt
\begin{ghys}

Nilpotent groups of real analytic diffeomorphisms of \,$\mathbb{S}^{2}$ have  finite orbits.

\end{ghys}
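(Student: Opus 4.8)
The plan is to reduce the statement to a purely planar problem and then induct on the nilpotency class. First I would pass to the finite-index subgroup $G^{+}$ of orientation-preserving elements: since $[G:G^{+}]\le 2$, a finite $G^{+}$-orbit is contained in a finite $G$-invariant set, so it suffices to treat $G^{+}$, which is again nilpotent. For orientation-preserving maps of $\mathbb{S}^{2}$ one has $\chi(\mathbb{S}^{2})=2$ and trivial action on $H_{0}$ and $H_{2}$, so the Lefschetz number equals $2\neq 0$; hence every element of $G^{+}$ has a fixed point. The crucial use of analyticity is that, for $g\neq\mathrm{id}$, the set $\mathrm{Fix}(g)$ is a \emph{proper}, nonempty, compact real-analytic subset of the surface, and therefore has finitely many connected components and a finite set of singular and isolated points.

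Next I would induct on the nilpotency class of $G^{+}$. Choosing $h\neq\mathrm{id}$ in the last nontrivial term of the lower central series, the element $h$ is central, so every $g\in G^{+}$ commutes with $h$ and thus $g(\mathrm{Fix}(h))=\mathrm{Fix}(h)$: the analytic set $Z:=\mathrm{Fix}(h)$ is $G^{+}$-invariant, nonempty and proper. If the finite set of isolated and singular points of $Z$ is nonempty, it is a finite $G^{+}$-invariant set and we are done. Otherwise $Z$ is a smooth compact $1$-manifold, i.e.\ a disjoint union of analytic circles. The complement of finitely many disjoint circles in $\mathbb{S}^{2}$ has an incidence graph (regions versus separating circles) that is a tree; a leaf region is bounded by a single circle and is therefore a closed disk $\overline{D}$ (Jordan--Schoenflies). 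Passing to the finite-index subgroup $G'$ stabilizing this region, we obtain a nilpotent group of orientation-preserving analytic diffeomorphisms of $\overline{D}$ with $\partial D\subseteq Z$, so the distinguished central element satisfies $h|_{\partial D}=\mathrm{id}$. This reduces the theorem to the following Disk Lemma, which I would prove by a parallel induction on nilpotency class: \emph{every nilpotent group of orientation-preserving analytic diffeomorphisms of $\overline{D}$ has a finite orbit.}

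For the Disk Lemma, Brouwer's theorem gives each element a fixed point in $\overline{D}$. I would first analyze the boundary action on $\partial D\cong\mathbb{S}^{1}$: as $G'$ is amenable it preserves a probability measure on $\partial D$, so the rotation number $\rho$ is a homomorphism to $\mathbb{R}/\mathbb{Z}$. If the circle action has a finite orbit we are done; if $\rho$ has finite image then $\ker\rho$ has finite index, and a nontrivial central element of $\ker\rho$ has rotation number $0$, hence a finite, $\ker\rho$-invariant analytic fixed set on $\partial D$, again yielding a finite orbit. The remaining case is that $\rho$ has infinite (hence dense) image, so some elements rotate the boundary irrationally. Here I would use the central element $h$ with $h|_{\partial D}=\mathrm{id}$ and examine $\mathrm{Fix}(h)\cap\mathrm{int}(D)$: isolated or singular interior points give a finite $G'$-invariant set; interior circles let us recurse into a \emph{strictly smaller} invariant sub-disk $\overline{D'}\subsetneq\overline{D}$ on which $h$ still fixes the boundary.

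The main obstacle is exactly the boundary-irrational case in which $\mathrm{Fix}(h)=\partial D$ with no interior fixed point of the central element, so no immediate recursion is available. To handle it I would exploit the planar fixed-point index: an orientation-preserving self-map of $\overline{D}$ with no fixed points on $\partial D$ has total interior index $1$, forcing interior fixed points for any element with irrational boundary rotation number. Combining this with analyticity — which prevents the fixed sets from accumulating pathologically and makes the relevant invariant sets finite or finite unions of curves — together with the commutation relations between such an element and the central $h$, I expect to extract a finite $G'$-invariant set in $\mathrm{int}(D)$, completing the induction and hence the proof. Controlling this interaction between the irrational boundary dynamics and the interior index is the delicate step where the real-analytic hypothesis is indispensable.
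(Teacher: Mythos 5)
First, a point of order: this paper does not prove the statement you were given. It quotes it as background, the proof being the content of Ghys's article \cite{ghys93}; so your attempt must be measured against Ghys's argument rather than anything in this text. Your opening reductions are sound and are indeed the natural first moves: passing to the orientation-preserving subgroup $G^{+}$ of index at most $2$, extracting fixed points from $L(g)=2$, using analyticity to see that $\mathrm{Fix}(g)$, for $g\neq\mathrm{id}$, is a compact analytic set with finitely many components and finitely many isolated and singular points, taking the $G^{+}$-invariant set $Z=\mathrm{Fix}(h)$ for a nontrivial central $h$, and, when $Z$ is a union of circles, reducing to a nilpotent action on a closed disk whose boundary $h$ fixes pointwise.

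The genuine gap is the one you flag yourself, and it is not a technicality: it is the heart of the theorem. In the case $\mathrm{Fix}(h)=\partial D$ with the boundary rotation homomorphism having dense image, the two tools you propose do not combine into a proof. The index argument does give every $g$ with irrational boundary rotation number a fixed point in $\mathrm{int}(D)$, but $\mathrm{Fix}(g)\cap\mathrm{int}(D)$ is invariant only under the centralizer of $g$, and in a nilpotent group a centralizer can have infinite index (the centralizer of a generator of the discrete Heisenberg group is a $\mathbb{Z}^{2}$ of infinite index), so no finite $G'$-invariant set falls out of this; your appeal to ``commutation relations'' relates $g$ only to $h$, and nothing in the sketch explains how invariance under all of $G'$ is to be achieved. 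Note also that the configuration you must handle is nonvacuous at the disk level: the time-one map of the analytic vector field $(1-x^{2}-y^{2})\,\partial_{x}$ is a nontrivial analytic diffeomorphism of $\overline{D}$ fixing $\partial D$ pointwise with no interior fixed point, so the case cannot be excluded by soft arguments. What is actually needed here is a rigidity theory for analytic germs along the invariant circle --- a nontrivial element fixing $\partial D$ pointwise has a well-defined contact order and leading coefficient along $\partial D$, and Kopell-type commutator rigidity for such germs, interacting with the irrational boundary dynamics, is what constrains the group enough to produce a finite orbit. Developing precisely this machinery is the core of \cite{ghys93}, and nothing in your outline substitutes for it. A secondary structural remark: both inductions on nilpotency class that you announce are never used --- the argument only ever invokes a central element, never an inductive hypothesis for smaller class --- so the reductions should either be made into genuine inductions or presented as what they are, direct reductions.
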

\vskip5pt

Naturally, in the result of Ghys, the identity map is an element with Lefschetz number different from zero.
The arguments in the proofs of these two results are very different and 
it is not clear at all how to generalize the real analytic arguments to the $C^{1}$
case for \,$\mathbb{S}^{2}$\, or other closed 
orientable surfaces with Euler characteristic different from zero.

\vglue5pt
At the end of this article we present versions of Theorem \ref{teo:main5} for the cases 
where the surface is the Klein bottle, the compact annulus and the compact M\"obius strip.

\vglue5pt
Given a homeomorphism  \,$\psi$\, of \,${\mathbb T}^{2}$\,, 
we denote by \,$L(\psi)$\, the Lefschetz
number of \,$\psi$.
 Let us remark that the class of \,$\psi$\, in the mapping
class group of the torus
can be identified with a matrix \,$[\psi]$\, in \,$\mathrm{GL}(2\,,{\mathbb Z})$.
The following elementary property will be key in the proof of Theorem \ref{teo:main5}.
\vskip10pt
\begin{lem}\label{Lefs:spec}
The Lefschetz number of a homeomorphism \,$\psi$\, of \,$\mathbb{T}^{2}$\, is different from zero
if and only if \,$1 \notin \mathrm{spec}[\psi]$\,.
\end{lem}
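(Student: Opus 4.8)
The plan is to compute \,$L(\psi)$\, directly from its definition as the alternating sum of the traces of the maps induced by \,$\psi$\, on the rational homology of \,$\mathbb{T}^{2}$, and then to recognize the resulting expression as the characteristic polynomial of \,$[\psi]$\, evaluated at \,$1$. Since \,$\mathbb{T}^{2}$\, has nontrivial homology only in degrees \,$0$, \,$1$\, and \,$2$, with \,$H_{0}(\mathbb{T}^{2};\mathbb{Q}) \cong \mathbb{Q}$, \,$H_{1}(\mathbb{T}^{2};\mathbb{Q}) \cong \mathbb{Q}^{2}$\, and \,$H_{2}(\mathbb{T}^{2};\mathbb{Q}) \cong \mathbb{Q}$, I first identify the three induced maps. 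On \,$H_{0}$\, the map \,$\psi_{*,0}$\, is the identity, because \,$\mathbb{T}^{2}$\, is connected; on \,$H_{1}$\, the map \,$\psi_{*,1}$\, is given precisely by the matrix \,$[\psi] \in \mathrm{GL}(2,\mathbb{Z})$\, representing the class of \,$\psi$\, in the mapping class group; and on \,$H_{2}$\, the map \,$\psi_{*,2}$\, is multiplication by the degree of \,$\psi$, which for a self-homeomorphism of the torus equals \,$\det[\psi]$.

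With these identifications in hand, the definition of the Lefschetz number gives
\begin{equation*}
L(\psi) = \mathrm{tr}(\psi_{*,0}) - \mathrm{tr}(\psi_{*,1}) + \mathrm{tr}(\psi_{*,2}) = 1 - \mathrm{tr}[\psi] + \det[\psi].
\end{equation*}
Writing \,$\lambda_{1},\lambda_{2}$\, for the (complex) eigenvalues of \,$[\psi]$, so that \,$\mathrm{tr}[\psi] = \lambda_{1} + \lambda_{2}$\, and \,$\det[\psi] = \lambda_{1}\lambda_{2}$, I factor the right-hand side as
\begin{equation*}
L(\psi) = 1 - (\lambda_{1} + \lambda_{2}) + \lambda_{1}\lambda_{2} = (1 - \lambda_{1})(1 - \lambda_{2}) = \det\bigl(I - [\psi]\bigr),
\end{equation*}
which is exactly the characteristic polynomial of \,$[\psi]$\, evaluated at \,$t = 1$.

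The conclusion is then immediate: \,$L(\psi) = 0$\, if and only if one of the factors \,$1 - \lambda_{i}$\, vanishes, that is, if and only if \,$1$\, is an eigenvalue of \,$[\psi]$. Equivalently, \,$L(\psi) \neq 0$\, precisely when \,$1 \notin \mathrm{spec}[\psi]$, which is the assertion of the lemma. I do not expect a substantial obstacle here; the only point that requires care is the correct identification of the induced maps on homology, in particular the fact that the action on \,$H_{2}$\, is multiplication by the degree of \,$\psi$\, and that this degree coincides with \,$\det[\psi]$\, for a torus homeomorphism.
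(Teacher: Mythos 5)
Your proof is correct and rests on the same identity the paper uses, namely \,$L(\psi) = \det\bigl(I - [\psi]\bigr)$\,; the only difference is that the paper simply cites this formula from Brooks--Brown--Pak--Taylor (where it is proved for tori of any dimension), whereas you derive it from scratch in dimension \,$2$\, via the alternating sum of traces on \,$H_{0}$, \,$H_{1}$\, and \,$H_{2}$. Your identifications of the induced maps (identity on \,$H_{0}$, the matrix \,$[\psi]$\, on \,$H_{1}$, multiplication by \,$\deg\psi = \det[\psi]$\, on \,$H_{2}$) are all standard and correct, so your argument is a valid, self-contained substitute for the citation.
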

\begin{proof}
Since
\,$L(\psi) = \det (Id - [\psi])$\, for tori in any dimension
(cf. \cite{Brooks-Brown-Pak-Taylor}), we conclude that
\,$L(\psi)$\, vanishes if and only if \,$1 \in \mathrm{spec}[\psi]$.
\end{proof}
The condition \,$1 \notin \mathrm{spec}[\psi]$\,
induces
some rigidity phenomena for \,$\psi$\, since in such a case the identity map is the unique deck 
transformation that commutes with \,$\tilde{\psi}$\, (cf. Lemma \ref{lem:pact}) 
where \,$\tilde{\psi}$\, is a lift of \,$\psi$\, to
the universal covering \,${\mathbb R}^{2}$\, of \,${\mathbb T}^{2}$. In particular the set 
\,$\mathrm{Fix}(\tilde{\psi})$\, of fixed points of \,$\tilde{\psi}$\, will be an (eventually empty) compact set  
by Lemma \ref{lim:fix:spec}.

\vglue5pt
 
 In order to show Theorem \ref{teo:main5} it suffices to find
a finite index normal subgroup of \,$G$\, that has a global fixed point.
For this we choose a convenient \,$\psi \in G$\, with \,$L(\psi)\neq 0$\, and a lift 
\,$\tilde{\psi}$\, such that \,$\mathrm{Fix}(\tilde{\psi}) \neq \emptyset$. We use
the property \,$1 \notin \mathrm{spec}[\psi]$\, and the
description of the nilpotent subgroups of
\,$\mathrm{GL}(2\,,{\mathbb Z})$\, 
to obtain a nilpotent subgroup \,$\tilde{H}$\, of diffeomorphisms of \,${\mathbb R}^{2}$\, consisting of 
lifts of elements of a finite index normal subgroup \,$H$\, of \,$G$. Moreover we can suppose    
\,$\tilde{\psi} \in \tilde{H}$. Since \,$\mathrm{Fix}(\tilde{\psi})$\, is a non-empty compact set, 
\,$\tilde{H}$\,
has a global fixed point by Theorem 1.4 of \cite{JR:arxivsp} (cf. Theorem \ref{cor:plane}).
Hence \,$H$\, has a global fixed point.

\vskip30pt
\section{Preliminaries}
\label{sec:pre}

This section is devoted to introducing some definitions and notations. For later reference, we also 
present the classification of  nilpotent subgroups of the mapping class group of \,${\mathbb T}^{2}$.
\begin{defi}
In the rest of this article  \,$\text{Fix}(f)$\,
denotes the set of fixed points of the map \,$f$. If \,$L$\, is a family of maps we note
\,$\text{Fix}(L):=\cap_{f\in L} \text{Fix}(f)$. We say that \,$\text{Fix}(L)$\, is the set of {\it global fixed points}
of the family \,$L$.
\end{defi}
Let \,$G$\, be a group and \,$H$\, be a subgroup of \,$G$. We denote by \,$[\,H,G\,]$\, the subgroup of
\,$G$\, generated by the elements of the form \,$[\,h\,,g\,]=hgh^{-1}g^{-1}$\, where
\,$h\in H$\, and \,$g\in G$. If \,$H$\, is a normal subgroup of \,$G$\, then \,$[\,H,G\,]$\,
is a  subgroup of \,$H$\, which is normal in \,$G$.

Given a group \,$G$\,  let us consider  the \,{\it upper central series}\,
\,$\{Z^{(n)}(G)\}_{n\geq0}$\, of \,$G$\,
$$Z^{(n+1)}(G):= \big\{g\in G \ \,; \ [\,g\,,f\,]\in Z^{(n)}(G) \ \ \text{for all} \ \ f\in G \big\}$$
where \,$Z^{(0)}(G)$\, is the trivial subgroup of \,$G$.
The members of the upper central series are characteristic subgroups of \,$G$. In particular they are normal
subgroups of \,$G$\, and we have
$$Z^{(0)}(G)\subset Z^{(1)}(G)\subset \cdots \subset Z^{(n)}(G) \subset \cdots \subset G .$$
If \,$Z^{(n)}(G)=G$\, for some \,$n\in \mathbb{Z}_{\geq0}$\, we say that \,$G$\, is a
\,{\it nilpotent group}. The smallest \,$n \in \mathbb{Z}_{\geq0}$\,
 such that \,$Z^{(n)}(G)=G$\, is the \,{\it nilpotency class}\, of \,$G$.

\vskip5pt

We denote by $\mathrm{Homeo}(M)$ and $\mathrm{Homeo}_{0}(M)$ the group 
of homeomorphisms of a manifold $M$ and 
its subgroup of homeomorphisms isotopic to the identity map respectively.


\vskip10pt
\begin{defi}
\label{def:imcg}
Let \,$[\,]: \mathrm{Homeo}({\mathbb T}^{2}) \to \mathrm{MCG}({\mathbb T}^{2})=
\mathrm{GL}(2\,,{\mathbb Z})$\,
be the map associating to an element
of \,$\mathrm{Homeo}({\mathbb T}^{2})$\, its image in the mapping class group.
Given a subgroup \,$G$\, of \,$\mathrm{Homeo}({\mathbb T}^{2})$\, we denote by \,$[G]$\,
the image of \,$G$\, by $[\,]$.
\end{defi}
\vskip5pt

Nilpotent subgroups of \,$\mathrm{Homeo}({\mathbb T}^{2})$\, induce nilpotent
subgroups of the mapping class group of \,${\mathbb T}^{2}$, i.e.
nilpotent subgroups of \,$\mathrm{GL}(2\,,{\mathbb Z})$. We will need a classification
of such groups in order to study rotational properties.
They are virtually cyclic and metabelian.
Moreover, there exists a unique example of a non-abelian group, up to conjugacy.

\vskip10pt
\begin{lem}
\label{lem:nmcg}
Let \,${\mathcal G}$\, be a nilpotent subgroup of
\,$\mathrm{MCG}({\mathbb T}^{2})$\,.
Then \,${\mathcal G}$\, is either of the form
\,$\langle N \rangle$\, or
\,$\langle N \,, -N \rangle$\, for some \,$N\in \mathcal{G}$\,,
or it is conjugated by a matrix in
\,$\mathrm{GL}(2\,,{\mathbb Q})$\, to the group
\[    {\mathcal H}:= \left\{
\left(
\begin{array}{rr}
1 & 0 \\
0 & 1 \\
\end{array}
\right), \
\left(
\begin{array}{rr}
-1 & 0 \\
0 & -1 \\
\end{array}
\right), \
\left(
\begin{array}{rr}
1 & 0 \\
0 & -1 \\
\end{array}
\right), \
\left(
\begin{array}{rr}
-1 & 0 \\
0 & 1 \\
\end{array}
\right), \right. \]
\[ \left.
\left(
\begin{array}{rr}
0 & -1 \\
1 & 0 \\
\end{array}
\right), \
\left(
\begin{array}{rr}
0 & 1 \\
-1 & 0 \\
\end{array}
\right), \
\left(
\begin{array}{rr}
0 & 1 \\
1 & 0 \\
\end{array}
\right), \
\left(
\begin{array}{rr}
0 & -1 \\
-1 & 0 \\
\end{array}
\right)
\right\}. \]
\end{lem}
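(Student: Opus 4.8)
The plan is to split the argument according to whether \,$\mathcal{G}$\, contains an element of infinite order, treating the resulting infinite and finite cases by completely different means. An element of \,$\mathrm{GL}(2,\mathbb{Z})$\, has infinite order precisely when it is not \,$\pm I$\, and either is hyperbolic (two distinct real eigenvalues, including the determinant \,$-1$\, case with nonzero trace) or parabolic (a repeated eigenvalue \,$\pm 1$). Suppose first that \,$\mathcal{G}$\, contains such an \,$A$, hyperbolic, with eigenlines \,$\ell_{1}\neq\ell_{2}$\, in \,$\mathbb{R}^{2}$. The crucial claim is that every \,$B\in\mathcal{G}$\, permutes the pair \,$\{\ell_{1},\ell_{2}\}$. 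Indeed \,$BAB^{-1}$\, is hyperbolic with the same eigenvalues and eigenlines \,$\{B\ell_{1},B\ell_{2}\}$; if these did not coincide with \,$\{\ell_{1},\ell_{2}\}$\, then \,$A$\, and \,$BAB^{-1}$\, would generate a subgroup of \,$\mathrm{GL}(2,\mathbb{R})$\, that is not nilpotent (non-elementary when no eigenline is shared, and reducible with a non-terminating lower central series when exactly one is shared), contradicting the nilpotency of \,$\mathcal{G}$. This yields a homomorphism \,$\phi\colon\mathcal{G}\to\mathrm{Sym}\{\ell_{1},\ell_{2}\}\cong\mathbb{Z}/2\mathbb{Z}$\, whose kernel \,$\mathcal{G}_{0}$\, fixes both eigenlines, hence is diagonal in the eigenbasis and therefore abelian. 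If some \,$B\notin\mathcal{G}_{0}$\, existed it would swap the eigenlines, forcing \,$BAB^{-1}=A^{-1}$; then \,$[B,A]=A^{-2}$\, and the lower central series of \,$\langle A,B\rangle$\, would read \,$\langle A^{2}\rangle\supsetneq\langle A^{4}\rangle\supsetneq\cdots$\,, a chain of nontrivial subgroups that never terminates, again contradicting nilpotency. Hence \,$\mathcal{G}=\mathcal{G}_{0}$\, is abelian; the parabolic case is handled identically using the single eigenline of \,$A$.

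Once \,$\mathcal{G}$\, is abelian and contains the infinite-order element \,$A$, it lies in the centralizer of \,$A$\, in \,$\mathrm{GL}(2,\mathbb{Z})$. A direct computation identifies this centralizer with \,$\{\pm I\}\times\mathbb{Z}$\, in both cases: via Dirichlet's unit theorem for the unit group of the relevant real quadratic order in the hyperbolic case, and by solving \,$MA=AM$\, directly in the parabolic case. A subgroup of \,$\{\pm I\}\times\mathbb{Z}$\, is either cyclic, of the form \,$\langle N\rangle$, or contains \,$-I$\, and then equals \,$\langle N,-N\rangle=\langle N,-I\rangle$\, for a suitable \,$N$. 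This produces the first two alternatives of the statement.

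It remains to treat the case where \,$\mathcal{G}$\, is finite. Here I invoke the classical classification of finite subgroups of \,$\mathrm{GL}(2,\mathbb{Z})$\, up to conjugacy: the cyclic groups of orders \,$1,2,3,4,6$\, and the dihedral groups of orders \,$2,4,6,8,12$; in particular every finite subgroup has order at most \,$12$\, and element orders in \,$\{1,2,3,4,6\}$. A dihedral group \,$D_{m}$\, is nilpotent if and only if \,$m$\, is a power of \,$2$, so the nilpotent examples are exactly the cyclic groups together with the Klein four-group \,$V=\{\pm I,\pm\,\mathrm{diag}(1,-1)\}$\, and the dihedral group of order \,$8$. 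A cyclic subgroup is of the form \,$\langle N\rangle$. The group \,$V$\, necessarily contains \,$-I$, since two commuting involutions different from \,$\pm I$\, are simultaneously diagonalizable with eigenvalues \,$\pm1$\, and hence have product \,$-I$; thus \,$V=\langle N,-N\rangle$\, with \,$N=\mathrm{diag}(1,-1)$. Finally, a non-abelian finite nilpotent subgroup must be the dihedral group of order \,$8$: its Sylow decomposition has a cyclic \,$3$-part and a \,$2$-part, the only non-abelian possibility for the latter being \,$D_{4}$, while a nontrivial factor \,$C_{3}$\, would force order \,$24>12$. Since \,$D_{4}$\, has a unique faithful two-dimensional complex representation, realized over \,$\mathbb{Q}$, any two copies of it in \,$\mathrm{GL}(2,\mathbb{Z})$\, are conjugate by an element of \,$\mathrm{GL}(2,\mathbb{Q})$; as \,$\mathcal{H}$\, is such a copy, \,$\mathcal{G}$\, is \,$\mathrm{GL}(2,\mathbb{Q})$-conjugate to \,$\mathcal{H}$, giving the third alternative.

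I expect the \emph{main obstacle} to be the eigenline claim in the infinite-order case, namely that nilpotency forbids an element from carrying an eigenline of \,$A$\, to a genuinely new line. I resolve the two-distinct-eigenvalue subcase through the non-existence of non-elementary nilpotent subgroups and the swap subcase through the non-terminating lower central series displayed above; everything else (the parabolic and determinant \,$-1$\, bookkeeping, and checking that the three normal forms are mutually consistent with their expected overlaps) is routine.
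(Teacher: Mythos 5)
Your argument is essentially correct, but it follows a genuinely different route from the paper. The paper passes to $\mathrm{PGL}(2,{\mathbb C})$, observes that $G_{+}=\tau({\mathcal G}\cap\mathrm{SL}(2,{\mathbb Z}))$ is a nilpotent Fuchsian group and hence cyclic, and then runs a trichotomy on a central element of $G_{+}$ (parabolic, hyperbolic, or elliptic/finite order), invoking the classification of finite M\"obius groups to isolate $D_{4}$ in the finite case. You instead work directly in $\mathrm{GL}(2,{\mathbb Z})$: eigenline rigidity plus explicit non-terminating lower central series handle the infinite-order case, Dirichlet's unit theorem identifies the centralizer with $\{\pm I\}\times{\mathbb Z}$, and the crystallographic classification of finite subgroups of $\mathrm{GL}(2,{\mathbb Z})$ handles the finite case, with the uniqueness of the faithful $2$-dimensional rational representation of the dihedral group of order $8$ giving the $\mathrm{GL}(2,{\mathbb Q})$-conjugacy to ${\mathcal H}$. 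Your approach buys independence from Fuchsian group theory at the price of leaning on two classical classification results; the paper's approach is more self-contained at the matrix level but imports hyperbolic-geometry facts. Both are legitimate.

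Two points need patching. First, your dichotomy ``contains an infinite-order element'' versus ``${\mathcal G}$ is finite'' omits the possibility that ${\mathcal G}$ is infinite but torsion; you must rule this out, e.g.\ by noting that a finitely generated nilpotent torsion group is finite, so ${\mathcal G}$ is locally finite, and since finite subgroups of $\mathrm{GL}(2,{\mathbb Z})$ have order at most $12$ the whole group is finite. Second, the parabolic case is not handled ``identically'': preserving the single eigenline only makes ${\mathcal G}$ upper triangular, not diagonal, and upper triangular subgroups of $\mathrm{GL}(2,{\mathbb Z})$ need not be abelian (e.g.\ $\mathrm{diag}(1,-1)$ together with a unipotent element generates a non-nilpotent group). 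You must additionally exclude elements with diagonal part $\mathrm{diag}(1,-1)$ or $\mathrm{diag}(-1,1)$ by the same non-terminating lower central series computation you used in the hyperbolic swap subcase; after that the group sits in $\{\pm I\}$ times the unipotent upper triangular subgroup, which is $\{\pm I\}\times{\mathbb Z}$ as needed. With these repairs the proof goes through.
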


The group ${\mathcal H}$ is isomorphic to the dihedral group $D_{4}$.
We are admitting orientation-reversing classes in the mapping class group.
Notice that if
all classes are orientation-preserving then \,${\mathcal G}$\, is abelian.
We did not find a proof of the above lemma in the literature and for sake of clarity 
we will prove it in Appendix A.

\vskip5pt
 
\noindent
 {\it Conventions}. From now on, we make the following conventions. A homeomorphism
\,$\tilde{\psi}\in \text{Homeo}(\mathbb{R}^{2})$\, always denotes a lift to the universal covering of
\,${\psi}\in \text{Homeo}(\mathbb{T}^{2})$\, and vice-versa. Moreover,
\,$\pi:\mathbb{R}^{2}\rightarrow\mathbb{T}^{2}$\, denotes the universal covering map and
unless explicitly stated otherwise a lift means a lift to the universal covering.

\vglue5pt

 Let \,$G$\, be a subgroup of \,$\mathrm{Homeo}(\mathbb{T}^{2})$.  We say that
a subgroup \,$\tilde{G}$\, of  \,$\mathrm{Homeo}(\mathbb{R}^{2})$\,
is a \,{\it lift}\, of \,$G$\,  if any element \,$\tilde{\phi}$\, of \,$\tilde{G}$\,
is a lift of  some element \,$\phi$\, of \,$G$\, and
the  natural projection \,$\kappa: \tilde{G} \to G$\, defined by \,$\kappa\big(\tilde{\phi}\big)=\phi$\, is an
isomorphism of groups.
Let us remark that the definition of lift for groups is more restrictive than the definition for single
homeomorphisms.
The translation \,$T_{(0,1)}$\, is a lift of the identity map
of \,${\mathbb T}^{2}$\, but the group \,$\langle T_{(0,1)} \rangle$\, is not a lift of
the group \,$\{Id\}$.

\vskip5pt

Now we introduce a result proved by the second author
in \cite{JR:arxivsp}
that will be used to find global fixed points
of nilpotent groups of diffeomorphisms of the torus.

\begin{teo}
\label{cor:plane}
Let \,$G$\, be a
nilpotent subgroup of \,$\mathrm{Diff}_{+}^{1}({\mathbb R}^{2})$\, such that
\,$\mathrm{Fix}(\phi)$\, is a non-empty compact set for some \,$\phi \in G$.
Then \,$G$\, has a global fixed point.
\end{teo}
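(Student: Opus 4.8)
The plan is to prove the theorem by induction on the nilpotency class of \,$G$, with the fixed point index as the main tool and the elementary remark that commuting maps preserve each other's fixed point sets. Throughout I would pass to the one-point compactification \,$S^{2}={\mathbb R}^{2}\cup\{\infty\}$: since \,$\mathrm{Fix}(\phi)$\, is compact, every \,$f\in G$\, with compact fixed point set extends to an orientation-preserving homeomorphism \,$\hat f$\, of \,$S^{2}$\, fixing \,$\infty$, and such a homeomorphism is isotopic to the identity, so \,$L(\hat f)=2$. The Lefschetz--Hopf formula then distributes this total index \,$2$\, between \,$\mathrm{Fix}(f)$\, and the isolated point \,$\infty$, producing a well-defined integer \,$\mathrm{ind}(f,\mathrm{Fix}(f))$\, that I can compute and manipulate. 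Note that \,$\phi\neq \mathrm{id}$\, (otherwise \,$\mathrm{Fix}(\phi)={\mathbb R}^{2}$\, would not be compact), so \,$G$\, is nontrivial and its central series is genuinely available.

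The heart of the argument is a propagation lemma for commuting pairs. First I would record that if \,$g$\, commutes with \,$f$\, then \,$g(\mathrm{Fix}(f))=\mathrm{Fix}(f)$, and that conjugation-invariance of the index gives \,$\mathrm{ind}(f,C)=\mathrm{ind}(f,g(C))$\, for every clopen piece \,$C$\, of \,$\mathrm{Fix}(f)$\, that \,$g$\, permutes. The lemma I want is: if \,$K\subseteq\mathrm{Fix}(f)$\, is compact, isolated in \,$\mathrm{Fix}(f)$, \,$g$-invariant and \,$\mathrm{ind}(f,K)\neq 0$, and \,$g$\, commutes with \,$f$, then \,$\mathrm{Fix}(f)\cap\mathrm{Fix}(g)\neq\emptyset$\, and in fact contains a compact subset on which the index of the pair is again nonzero. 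The engine is planar: after replacing \,$K$\, by its filling (the complement of the unbounded complementary component, which is \,$g$-invariant and non-separating), I would analyze how \,$g$\, permutes the index-carrying components and apply a fixed point theorem for invariant non-separating continua of Cartwright--Littlewood--Bell type, using additivity of the index to select an invariant piece keeping the index nonzero. Maintaining both \emph{compactness} and \emph{nonvanishing of the index} through this intersection is exactly what lets the argument iterate.

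With the propagation lemma in hand the abelian case follows by intersecting fixed point sets one element at a time: every \,$g\in G$\, commutes with \,$\phi$\, and hence preserves \,$\mathrm{Fix}(\phi)$, so starting from an index-nonzero compact core of \,$\mathrm{Fix}(\phi)$\, I successively intersect with \,$\mathrm{Fix}(g)$, each step producing a smaller nonempty compact set of nonzero index; reducing first to finitely generated subgroups, the finite-intersection property of nested nonempty compacta yields \,$\mathrm{Fix}(G)\neq\emptyset$. For the inductive step I would exploit the central series to reduce the general nilpotent action to this abelian situation, running the machinery stage by stage on a \,$G$-invariant compact set of nonzero index.

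The main obstacle I anticipate is that \,$\mathrm{Fix}(\phi)$\, is \emph{not} \,$G$-invariant once \,$G$\, is only nilpotent: a general \,$g\in G$\, does not commute with \,$\phi$, and \,$g(\mathrm{Fix}(\phi))=\mathrm{Fix}(g\phi g^{-1})$\, where \,$g\phi g^{-1}$\, differs from \,$\phi$\, by a commutator lying deeper in the central series. The plan is to use precisely this: on the subset already fixed by the relevant central subgroup, \,$g$\, does commute with \,$\phi$, so the abelian machinery applies, and one climbs the series assembling a genuinely \,$G$-invariant index-nonzero object. Making these invariant pieces simultaneously compatible for all of \,$G$, together with the subtler point that \,$\mathrm{Fix}(\phi)$\, itself may have index zero (a time-one map of a suitable flow can have a compact fixed set of index \,$0$), so that a nonzero-index core must first be \emph{located}, is where I expect the real work to be. I would resolve the latter using the rigidity of local indices of orientation-preserving planar homeomorphisms --- Brouwer's translation theorem and the Le Calvez--Yoccoz / Bonino bounds on the index of a fixed point accumulated by periodic orbits --- to exhibit either a subset of nonzero index or the periodic structure forcing the desired fixed points, and it is here that the nilpotency of \,$G$\, must enter in an essential way.
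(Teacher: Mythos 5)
A preliminary remark: the paper does not prove this statement. It is quoted as Theorem 1.4 of \cite{JR:arxivsp} and used as a black box, so there is no internal proof to compare yours against; your attempt has to be judged as a standalone proof of what is in fact the main technical input of the paper. Judged that way, it has genuine gaps, and they sit exactly where the difficulty of the theorem lies.

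The central gap is the ``propagation lemma''. For two commuting maps it asserts: if $g$ commutes with $f$ and $K\subseteq\mathrm{Fix}(f)$ is a compact, isolated, $g$-invariant set with $\mathrm{ind}(f,K)\neq 0$, then $\mathrm{Fix}(f)\cap\mathrm{Fix}(g)$ contains a compact set again carrying nonzero index. This is not an auxiliary step: it is essentially the abelian (two-generator) case of the theorem itself, which is the Franks--Handel--Parwani theorem \cite{fhp02}, whose proof occupies a full paper and uses the $C^{1}$ hypothesis in an essential way. Your proposed derivation of it from Cartwright--Littlewood--Bell plus index additivity fails concretely. First, the filling $\hat K$ of $K$ need not be connected, and Cartwright--Littlewood requires an invariant non-separating \emph{continuum}; when $g$ permutes the components of $\hat K$ without fixing any, passing to a power $g^{n}$ only yields common fixed points of $f$ and $g^{n}$, i.e.\ finite orbits rather than fixed points --- precisely the weaker conclusion that holds on $\mathbb{S}^{2}$, so no soft argument can erase this distinction. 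Second, even when $\hat K$ is a continuum, Cartwright--Littlewood applied to $g$ produces a fixed point of $g$ in $\hat K$, not in $K$: the filling adjoins bounded complementary components on which $f$ is not the identity, so that point need not be fixed by $f$. (A Cartwright--Littlewood theorem for the whole group, which would repair this, is the main theorem of \cite{rib01}; it is itself a substantial result and again needs a connected invariant set.) Finally, the ``index of the pair'' that your iteration must propagate through successive intersections $\mathrm{Fix}(f)\cap\mathrm{Fix}(g_{1})\cap\cdots$ is never defined, and no standard fixed-point index plays that role.

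There is a second concrete failure at the very first step: the nonzero-index core you need may not exist. Let $\phi$ be the time-one map of the flow of a smooth vector field vanishing exactly on the unit circle and equal elsewhere to a positive function times a fixed constant vector. Then $\mathrm{Fix}(\phi)$ is the circle: compact, connected, and of total index $0$; being connected, it contains no isolated compact subset of nonzero index at all, so the index machinery has nothing to grab onto. You acknowledge this issue, but the proposed remedy (Brouwer theory, Le Calvez--Yoccoz/Bonino index bounds) is left unspecified, and it is telling that every tool you invoke --- Cartwright--Littlewood, Brouwer translation theory, index rigidity --- is purely topological and available for homeomorphisms. A proof along your lines would therefore establish the statement in $\mathrm{Homeo}_{+}(\mathbb{R}^{2})$, which is not known: all existing proofs (Franks--Handel--Parwani in the abelian case, Rib\'on \cite{JR:arxivsp} in the nilpotent case) need $C^{1}$ regularity, for instance to run Thurston normal-form arguments on complements of fixed-point sets. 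Your outline never uses differentiability anywhere, which is a strong signal that the missing steps are not routine bookkeeping but the actual content of the theorem.
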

\vskip5pt

In the above theorem, \,$\mathrm{Diff}^{1}_{+}(\mathbb{R}^{2})$\, denotes the set of  \,$C^{1}$ orientation preserving  diffeomorphisms of \,$\mathbb{R}^{2}$.

\vskip30pt
\section{Rotational properties}
\label{sec:rot}
 
Let us introduce rotation vectors for $\phi \in \mathrm{Homeo}_{0}({\mathbb T}^{2})$.
Following \cite{mi03}, the rotation vectors
of a lift \,$\tilde{\phi}$\, are the limits of  sequences of the form
\[ \frac{\tilde{\phi}^{\,n_{k}}(\tilde x_{k}) - \tilde x_{k}}{n_{k}} \]
where \,$(n_{k})_{k \geq 1}$\, is  an increasing sequence  of positive integers and
\,$(\tilde x_{k})_{k \geq 1}$\, is a sequence of points in \,$\mathbb{R}^{2}$.
This set will be denoted by \,$\rho\big(\tilde\phi\big)$. We know from
\cite{mi03} that it is a non-empty compact and convex subset of \,$\mathbb{R}^{2}$.

Equivalently we know that
\,$\rho \big(\tilde{\phi}\big) = \big\{ \rho_{\mu} \big(\tilde{\phi}\big) \ \, ; \ \mu \in {\mathcal P}(\phi) \big\}$\,
where \,${\mathcal P}(\phi)$\, is the set of $\phi$-invariant Borel probability measures and

$$ {\rho}_{\mu}\big(\tilde{\phi}\big): = \int_{{\mathbb T}^{2}} \big(\tilde{\phi} - Id \big)
 \ d \mu \,. $$
Notice that since \,$\phi$\, belongs to \,$\mathrm{Homeo}_{0}({\mathbb T}^{2})$\, then
\,$\tilde{\phi}$\, commutes with the covering transformations and
\,$\tilde{\phi}-Id$\, descends to a well-defined map in \,${\mathbb T}^{2}$.
The set \,$\rho \big(\tilde{\phi}\big)$\,  depends
on the lift \,$\tilde{\phi}$\, of \,$\phi$\, but it satisfies
\,$\rho \big(T_{v} \circ \tilde{\phi}\big) = T_{v}\big(\rho \big(\tilde{\phi}\big)\big)$\, where
 \,$T_{v}$\, is the  translation in \,$\mathbb{R}^{2}$\, by the vector
 \,$v\in\mathbb{Z}^{2}$.
In particular the projection \,$\rho (\phi)$\, of \,$\rho \big(\tilde{\phi}\big)$\, in
\,${\mathbb T}^{2} = {\mathbb R}^{2}/{\mathbb Z}^{2}$\, 
depends on \,$\phi$\, but it does not depend on the lift \,$\tilde{\phi}$\, of \,$\phi$.
In what follows we will be using frequently the next two well known results.

\begin{lem}
\label{lem:pact}
Let \,$\phi \in \mathrm{Homeo}_{0}({\mathbb T}^{2})$\,, \,$\mu\in\mathcal{P}(\phi)$\, and
\,$\psi \in \mathrm{Homeo}({\mathbb T}^{2})$\,.
For all lifts to the universal covering \,$\tilde{\phi}$\, and \,$\tilde{\psi}$\,  of \,$\phi$\, and \,$\psi$\,  respectively,
we have\,:
\[\tilde{\psi} \circ T_{v}= T_{[\psi](v)} \circ \tilde{\psi} \quad \text{and} \quad
[\psi]\Big(\rho_{\mu}\big(\tilde{\phi}\big)\Big) =
\rho_{\nu}\big(\tilde{\psi} \circ \tilde{\phi} \circ \tilde{\psi}^{-1}\big)  \]
where \,$v\in\mathbb{Z}^{2}$\, and
\,$\nu=\psi_{*}(\mu)$\,.
\end{lem}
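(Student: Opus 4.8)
The plan is to prove the two assertions in turn, establishing the deck-transformation identity first and then using it to compute the rotation vectors.

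For the first identity I would argue by covering space theory. Since $\pi\circ T_{v}=\pi$ for every $v\in\mathbb{Z}^{2}$, the map $\tilde{\psi}\circ T_{v}$ is again a lift of $\psi$; as two lifts of the same homeomorphism differ by a deck transformation acting on the left, there is a unique $w(v)\in\mathbb{Z}^{2}$ with $\tilde{\psi}\circ T_{v}=T_{w(v)}\circ\tilde{\psi}$. A short computation using $T_{v_{1}+v_{2}}=T_{v_{1}}\circ T_{v_{2}}$ shows that $v\mapsto w(v)$ is a group homomorphism of $\mathbb{Z}^{2}$, hence is given by a matrix $A\in\mathrm{GL}(2,\mathbb{Z})$ (invertible because $\psi$ is a homeomorphism). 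This matrix is exactly the induced action of $\psi$ on the deck group $\cong\pi_{1}(\mathbb{T}^{2})\cong H_{1}(\mathbb{T}^{2};\mathbb{Z})$, which is the class $[\psi]\in\mathrm{MCG}(\mathbb{T}^{2})=\mathrm{GL}(2,\mathbb{Z})$; therefore $A=[\psi]$. Rewriting $\tilde{\psi}\circ T_{v}=T_{[\psi](v)}\circ\tilde{\psi}$ as $\tilde{\psi}(\tilde{x}+v)=\tilde{\psi}(\tilde{x})+[\psi](v)$ yields the form used below, and one checks directly that it is independent of the chosen lift.

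For the second identity, write $g:=\tilde{\psi}\circ\tilde{\phi}\circ\tilde{\psi}^{-1}$. Since $[\psi\phi\psi^{-1}]=[\psi][\phi][\psi]^{-1}=Id$, the map $\psi\phi\psi^{-1}$ belongs to $\mathrm{Homeo}_{0}(\mathbb{T}^{2})$ and $g$ is one of its lifts, so $\rho_{\nu}(g)$ is defined; moreover $\nu=\psi_{*}(\mu)$ is $(\psi\phi\psi^{-1})$-invariant, hence $\nu\in\mathcal{P}(\psi\phi\psi^{-1})$. Applying the integral formula and the change of variables $\int F\,d(\psi_{*}\mu)=\int (F\circ\psi)\,d\mu$, together with the $\mathbb{Z}^{2}$-periodicity of $g-Id$, I would reduce $\rho_{\nu}(g)$ to $\int_{\mathbb{T}^{2}}\big(\tilde{\psi}(\tilde{\phi}(\tilde{y}))-\tilde{\psi}(\tilde{y})\big)\,d\mu(\bar y)$, where $\tilde{y}$ is any lift of $\bar y$.

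The heart of the argument — and the step that needs care — is that $\tilde{\psi}$ is not linear, so the integrand is not pointwise equal to $[\psi]\big(\tilde{\phi}(\tilde{y})-\tilde{y}\big)$. The right device is to set $h:=\tilde{\psi}-[\psi]$, viewing $[\psi]$ as a linear map of $\mathbb{R}^{2}$; the first identity gives $h(\tilde{x}+v)=h(\tilde{x})$ for $v\in\mathbb{Z}^{2}$, so $h$ descends to $\mathbb{T}^{2}$. Substituting $\tilde{\psi}=[\psi]+h$ produces
$$\tilde{\psi}(\tilde{\phi}(\tilde{y}))-\tilde{\psi}(\tilde{y})=[\psi]\big(\tilde{\phi}(\tilde{y})-\tilde{y}\big)+h(\phi(\bar y))-h(\bar y).$$
Integrating against $\mu$, the first term yields $[\psi]\big(\rho_{\mu}(\tilde{\phi})\big)$, while the coboundary $h\circ\phi-h$ integrates to zero because $\mu$ is $\phi$-invariant. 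This gives $\rho_{\nu}(g)=[\psi]\big(\rho_{\mu}(\tilde{\phi})\big)$, as required. The main obstacle is precisely recognizing the nonlinear remainder as a coboundary and invoking the invariance of $\mu$ to discard it; the rest is bookkeeping with lifts and the change of variables.
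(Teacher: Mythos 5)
Your proof is correct and complete: the paper states Lemma \ref{lem:pact} without proof (as a ``well known result''), and your argument is the standard one the authors implicitly rely on --- the identification of the deck-group action with $[\psi]$, the change of variables for $\psi_{*}\mu$, and the decomposition $\tilde{\psi}=[\psi]+h$ with the periodic remainder $h\circ\phi-h$ killed by $\phi$-invariance of $\mu$. Nothing to add.
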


\vskip5pt

\begin{lem}
\label{lem:mor}
Consider the subgroup \,$\mathrm{Homeo}_{0,\mu}({\mathbb T}^{2})$\,
of \,$\mathrm{Homeo}_{0}({\mathbb T}^{2})$\, whose elements preserve a
probability measure \,$\mu$\, and let
\,$\mathrm{\widetilde{H}omeo}_{0,\mu}({\mathbb R}^{2})$\,
be the subgroup of \,$\mathrm{Homeo}_{0}({\mathbb R}^{2})$\, consisting of all the lifts of 
elements of \,$\mathrm{Homeo}_{0,\mu}({\mathbb T}^{2})$. Then the map
\,$\rho_{\mu}: \mathrm{\widetilde{H}omeo}_{0,\mu}({\mathbb R}^{2}) \to {\mathbb R}^{2}$\,
is a morphism of groups.
\end{lem}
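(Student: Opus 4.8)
The plan is to prove additivity of the displacement under composition and then integrate, using crucially that $\mu$ is preserved by each map involved. Before anything else I would check that \,$\mathrm{\widetilde{H}omeo}_{0,\mu}(\mathbb{R}^{2})$\, really is a group: if \,$\phi$\, and \,$\psi$\, both preserve \,$\mu$\,, then so do \,$\psi\circ\phi$\, and \,$\phi^{-1}$\,, and the composition (resp. inverse) of their lifts is a lift of \,$\psi\circ\phi$\, (resp. of \,$\phi^{-1}$\,); hence the set of such lifts is closed under the group operations. The target is the additive group \,$(\mathbb{R}^{2},+)$\,, and the source operation is composition, so the claim to establish is \,$\rho_{\mu}(\tilde{\psi}\circ\tilde{\phi})=\rho_{\mu}(\tilde{\psi})+\rho_{\mu}(\tilde{\phi})$\, for all \,$\tilde{\phi},\tilde{\psi}\in \mathrm{\widetilde{H}omeo}_{0,\mu}(\mathbb{R}^{2})$\,.

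As already observed in the text, since \,$\phi,\psi\in\mathrm{Homeo}_{0}(\mathbb{T}^{2})$\, their lifts commute with the covering transformations, so the displacements \,$\tilde{\phi}-Id$\, and \,$\tilde{\psi}-Id$\, descend to well-defined maps on \,$\mathbb{T}^{2}$. The key computation is the telescoping identity for the displacement of a composition: for every \,$\tilde{x}\in\mathbb{R}^{2}$\,,
\[ (\tilde{\psi}\circ\tilde{\phi})(\tilde{x})-\tilde{x}
= \big[\tilde{\psi}(\tilde{\phi}(\tilde{x}))-\tilde{\phi}(\tilde{x})\big]
+ \big[\tilde{\phi}(\tilde{x})-\tilde{x}\big]. \]
Reading both sides as maps on \,$\mathbb{T}^{2}$\, this says
\[ \big(\tilde{\psi}\circ\tilde{\phi}-Id\big) = \big(\tilde{\psi}-Id\big)\circ\phi + \big(\tilde{\phi}-Id\big). \]

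Integrating this equality over \,$\mathbb{T}^{2}$\, against \,$\mu$\, and splitting the sum gives
\[ \rho_{\mu}\big(\tilde{\psi}\circ\tilde{\phi}\big)
= \int_{\mathbb{T}^{2}}\big(\tilde{\psi}-Id\big)\circ\phi \ d\mu
+ \int_{\mathbb{T}^{2}}\big(\tilde{\phi}-Id\big) \ d\mu. \]
For the first integral I would apply the change-of-variables formula together with the invariance \,$\phi_{*}\mu=\mu$\,, obtaining
\[ \int_{\mathbb{T}^{2}}\big(\tilde{\psi}-Id\big)\circ\phi \ d\mu
= \int_{\mathbb{T}^{2}}\big(\tilde{\psi}-Id\big) \ d(\phi_{*}\mu)
= \int_{\mathbb{T}^{2}}\big(\tilde{\psi}-Id\big) \ d\mu
= \rho_{\mu}\big(\tilde{\psi}\big), \]
which, combined with the second integral \,$\rho_{\mu}(\tilde{\phi})$\,, yields the desired additivity.

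The computation itself is routine; the only point that genuinely requires care — and the reason the domain is restricted to \,$\mathrm{Homeo}_{0,\mu}$\, rather than all of \,$\mathrm{Homeo}_{0}$\, — is the invariance of \,$\mu$\, under \,$\phi$\,, used to absorb the factor \,$\circ\,\phi$\, in the first integral. If \,$\mu$\, were not \,$\phi$-invariant, the first term would be an integral against \,$\phi_{*}\mu$\, and additivity would break down, so I would make sure to flag explicitly where \,$\phi_{*}\mu=\mu$\, enters.
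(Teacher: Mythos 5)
Your proof is correct and is exactly the argument the paper intends: the paper dismisses this lemma with the single line ``The proof is obtained by a change of variable argument,'' and your telescoping identity for the displacement followed by the substitution \,$\int (\tilde{\psi}-Id)\circ\phi \ d\mu = \int (\tilde{\psi}-Id) \ d(\phi_{*}\mu) = \int (\tilde{\psi}-Id) \ d\mu$\, is precisely that change of variables, with the role of \,$\phi_{*}\mu=\mu$\, correctly identified as the essential hypothesis.
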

The proof is obtained by a change of variable argument.

Let \,$G$\, be a subgroup of \,$\mathrm{Homeo}({\mathbb T}^{2})$\,. We denote by \,$G_{0}$\,
the subgroup  of isotopic to the identity elements of \,$G$. 
 By \,${\mathcal P}(G)$\,  we denote
the set \,$\cap_{\psi \in G} \, {\mathcal P}(\psi)$\, of \,$G$-invariant Borel probability measures.
We say that an element
\,$\phi \in G_{0}$\, is \,$\mathcal{P}(G)$-{\it irrotational}\, if
\,$\mathcal{P}(G)\neq\emptyset$\, and
 there exists a lift
\,$\tilde{\phi} \in \mathrm{Homeo}_{0}(\mathbb{R}^{2})$\, of \,$\phi$\, such that
\,$\rho_{\mu}(\tilde{\phi})=(0\,,0)$\, for all \,$\mu \in {\mathcal P} (G)$.

 We define \,$G_{\mathcal{I}}$\, as 
 the set of all the elements of \,$G_{0}$\, that are
\,$\mathcal{P}(G)$-{\it irrotational}\, i.e.,
\[ G_{\mathcal I} := \big\{ \phi \in G_{0} \ \ ; \ \ \exists \ \ \text{a lift} \ \
\tilde{\phi} \ \ \text{s.t.} \ \ \rho_{\mu}(\tilde{\phi})=(0\,,0) \ \
\text{for all} \ \ \mu \in {\mathcal P} (G) \big\}. \]
Moreover, it follows from Lemmas \ref{lem:pact} and \ref{lem:mor}
that \,$G_{\mathcal{I}}$\, is a normal subgroup of
\,$G$\,  if \,${\mathcal P} (G)$\, is non-empty.


Now we introduce some new notations.
Suppose given \,$\mu \in \mathcal{P}(G)$.
Following the convention about \,$\tilde{\phi}$\, and \,$\phi$\,
we define the following sets\,:
\begin{align}
G^{\mu}_{\mathcal{I}}:= & \big\{ \phi \in G_{0} \ \ ; \ \exists \ \ \text{a lift} \
\ \tilde{\phi} \ \ \text{s.t.} \ \ \rho_{\mu}\big(\tilde{\phi}\big)=(0\,,0)  \big\}\,;    \notag   \\
\tilde{G}^{\mu}_{\mathcal{I}}:=  &  \big\{ \tilde{\phi} \in \text{Homeo}_{0}(\mathbb{R}^{2}) \ \ ; \
\phi\in G_{0} \ \ \text{and} \ \  \rho_{\mu}\big(\tilde{\phi}\big)=(0\,,0) \big\}\,;   \notag   \\
\tilde{G}_{\mathcal{I}}:=  &  \big\{ \tilde{\phi} \in \text{Homeo}_{0}(\mathbb{R}^{2}) \ \ ; \ \phi\in G_{0} \ \ \text{and}
\ \  \rho_{\nu}\big(\tilde{\phi}\big)=(0\,,0)   \notag   \ \
\text{for all} \ \  \nu \in {\mathcal P} (G) \big\}\,;        \notag \\
\notag
\end{align}
and we have
\begin{align}\label{rel:global}
\tilde{G}_{\mathcal{I}} = \bigcap_{\nu\in\mathcal{P}(G)} \!\! \tilde{G}^{\nu}_{\mathcal{I}} \,.
\end{align}

Since
\,$\rho_{\mu}:\mathrm{\widetilde{H}omeo}_{0,\mu}(\mathbb{R}^{2})\rightarrow\mathbb{R}^{2}$\,
is a morphism of groups,
\,$\tilde{G}^{\mu}_{\mathcal{I}}$\, and \,$\tilde{G}_{\mathcal{I}}$\,
\big(resp. \,$G^{\mu}_{\mathcal{I}}$\,  and \,${G}_{\mathcal{I}}$\big) are subgroups 
of \,$\text{Homeo}_{0}(\mathbb{R}^{2})$\,
\big(resp. \,$\text{Homeo}_{0}(\mathbb{T}^{2})$\big). Moreover, we have that
\,$\tilde{G}_{\mathcal{I}}$\,   and    \,$\tilde{G}^{\mu}_{\mathcal{I}}$\,
are lifts of
\,${G}_{\mathcal I}$\, and
\,${G}_{\mathcal I}^{\mu}$\, respectively, since the natural projections
\,$\tilde{\phi}\in\tilde{G}_{\mathcal I}^{\mu} \xrightarrow{\kappa} \phi \in {G}_{\mathcal I}^{\mu} $\, and
\,$\tilde{\phi}\in \tilde{G}_{\mathcal I} \xrightarrow{\kappa} \phi \in {G}_{\mathcal I}$\,  are isomorphisms.

Clearly, we also have  \,$G_{\mathcal{I}} \subset
G_{\mathcal{I}}^{\mu}$\, for any \,$\mu\in\mathcal{P}(G)$\,.
Notice that ${\mathcal P}(G)$ is non-empty
if $G$ is an amenable group.
In particular ${\mathcal P}(G)$ is non-empty
if $G$ is a nilpotent group.

In the remainder of this section we will see that
the $\mathcal{P}(G)$-irrotational subgroup \,$G_{\mathcal{I}}$\, of a nilpotent group
\,$G \subset \mathrm{Homeo}({\mathbb T}^{2})$\,
is well-behaved with respect to lifts.
 In particular, if \,$G$\, has an element with Lefschetz number different from zero then
we can even show the existence of a
finite $G$-orbit (cf. Theorem \ref{teo:fin}).

 Our goal in the remaining of this section is relating
the rotational properties of a nilpotent group with
the existence of lifts.

\vskip10pt
\begin{lem}
\label{lem:lifr}
Let \,$G$\, be a subgroup of \,$\mathrm{Homeo}({\mathbb T}^{2})$\,
and let \,$\widehat{G}\subset \mathrm{Homeo}(\mathbb{R}^{2})$\, be the subgroup of
all lifts of elements of \,$G$.
If \,$\mu \in \mathcal{P}(G)$\, then
\,$\tilde{G}_{\mathcal I}^{\mu}\,,\tilde{G}_{\mathcal I}$\, and
\,${G}_{\mathcal I}^{\mu}\,,{G}_{\mathcal I}$\, are normal subgroups of \,$\widehat{G}$\, and \,$G$\, respectively.

\end{lem}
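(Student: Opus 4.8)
The plan is to establish the normality upstairs, inside $\widehat{G}$, and then transport the conclusion down to $G$ through the covering projection. The whole argument rests on the conjugation formula of Lemma \ref{lem:pact}, which describes how a rotation vector transforms under conjugation, together with the standing hypothesis $\mu\in\mathcal{P}(G)$, i.e. that $\mu$ is invariant under \emph{every} element of $G$. Throughout I would use two elementary facts: first, that $G_{0}$ is normal in $G$, since it is the kernel of the restriction to $G$ of the homomorphism $[\,\cdot\,]\colon\mathrm{Homeo}(\mathbb{T}^{2})\to\mathrm{GL}(2,\mathbb{Z})$; and second, that if $\tilde\phi\in\widehat{G}$ covers $\phi$ and $\tilde\psi\in\widehat{G}$ covers $\psi$, then $\tilde\psi\circ\tilde\phi\circ\tilde\psi^{-1}$ is a lift of $\psi\phi\psi^{-1}\in G$, hence again lies in $\widehat{G}$; moreover, when $\phi\in G_{0}$ one has $\psi\phi\psi^{-1}\in G_{0}\subset\mathrm{Homeo}_{0}(\mathbb{T}^{2})$, and any lift of an element of $\mathrm{Homeo}_{0}(\mathbb{T}^{2})$ belongs to $\mathrm{Homeo}_{0}(\mathbb{R}^{2})$ by lifting an isotopy to the identity.

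First I would prove that $\tilde{G}_{\mathcal{I}}^{\mu}$ is normal in $\widehat{G}$. Take $\tilde\phi\in\tilde{G}_{\mathcal{I}}^{\mu}$, so that $\phi\in G_{0}$ and $\rho_{\mu}(\tilde\phi)=(0,0)$, and take an arbitrary $\tilde\psi\in\widehat{G}$ covering some $\psi\in G$. Since $\mu\in\mathcal{P}(G)\subset\mathcal{P}(\phi)$, Lemma \ref{lem:pact} applies with $\nu=\psi_{*}(\mu)$ and yields
\begin{align}
\rho_{\psi_{*}(\mu)}\big(\tilde\psi\circ\tilde\phi\circ\tilde\psi^{-1}\big)=[\psi]\big(\rho_{\mu}(\tilde\phi)\big)=[\psi]\big((0,0)\big)=(0,0), \notag
\end{align}
the last equality because $[\psi]\in\mathrm{GL}(2,\mathbb{Z})$ is linear and fixes the origin. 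As $\mu$ is $G$-invariant we have $\psi_{*}(\mu)=\mu$, whence $\rho_{\mu}(\tilde\psi\circ\tilde\phi\circ\tilde\psi^{-1})=(0,0)$. By the remarks above $\tilde\psi\circ\tilde\phi\circ\tilde\psi^{-1}$ lies in $\mathrm{Homeo}_{0}(\mathbb{R}^{2})$ and covers $\psi\phi\psi^{-1}\in G_{0}$, so it belongs to $\tilde{G}_{\mathcal{I}}^{\mu}$; this is normality. For $\tilde{G}_{\mathcal{I}}$ I would invoke relation \eqref{rel:global}: since \emph{every} $\nu\in\mathcal{P}(G)$ is $G$-invariant, the computation just made shows that each $\tilde{G}_{\mathcal{I}}^{\nu}$ is normal in $\widehat{G}$, and therefore $\tilde{G}_{\mathcal{I}}=\bigcap_{\nu\in\mathcal{P}(G)}\tilde{G}_{\mathcal{I}}^{\nu}$ is normal as an intersection of normal subgroups.

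Finally I would descend to $G$. The map $\widehat{G}\to G$ sending each lift to the homeomorphism it covers is a surjective group homomorphism, and by the very definition of the sets involved it carries $\tilde{G}_{\mathcal{I}}^{\mu}$ onto $G_{\mathcal{I}}^{\mu}$ and $\tilde{G}_{\mathcal{I}}$ onto $G_{\mathcal{I}}$. Since the image of a normal subgroup under a surjective homomorphism is normal, $G_{\mathcal{I}}^{\mu}$ and $G_{\mathcal{I}}$ are normal in $G$. (Alternatively one may repeat the displayed computation downstairs: given $\phi\in G_{\mathcal{I}}^{\mu}$, choose the lift $\tilde\phi$ with $\rho_{\mu}(\tilde\phi)=(0,0)$, pick any lift $\tilde\psi$ of a given $\psi\in G$, and read off that $\tilde\psi\circ\tilde\phi\circ\tilde\psi^{-1}$ witnesses $\psi\phi\psi^{-1}\in G_{\mathcal{I}}^{\mu}$.) The step demanding the most care is not the algebra of normality but the verification of the hypotheses of Lemma \ref{lem:pact} at each application — precisely that the relevant measure is fixed by the conjugating element, so that $\psi_{*}(\mu)=\mu$. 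This is exactly the point at which the assumption $\mu\in\mathcal{P}(G)$, rather than the weaker $\mu\in\mathcal{P}(\phi)$, becomes indispensable.
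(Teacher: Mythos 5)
Your proposal is correct and follows essentially the same route as the paper's proof: conjugate, apply Lemma \ref{lem:pact} together with the $G$-invariance of $\mu$ to see that the rotation vector of the conjugated lift vanishes, and then obtain normality of $\tilde{G}_{\mathcal I}$ and $G_{\mathcal I}$ from relation \eqref{rel:global} and the projection $\kappa$. Your explicit isolation of the step $\psi_{*}(\mu)=\mu$ is a slightly more careful rendering of the same argument, not a different one.
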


\begin{proof}
Let \,$\tilde{\phi} \in \tilde{G}_{\mathcal I}^{\mu}$\, and \,$\tilde{\psi}\in \widehat{G}$.
By definition and convention we conclude that
\,$\phi \in G_{0}$\,, \,$\rho_{\mu}\big(\tilde{\phi}\big)=(\,0\,,0\,)$\, and \,$\psi \in G$.
Then  \,$\tilde{\psi}\circ \tilde{\phi}\circ \tilde{\psi}^{-1}$\, is a lift
of \,${\psi}\circ {\phi}\circ {\psi}^{-1} \in G_{0}$\,. Since \,$\mu \in \mathcal{P}(G)$\,
it follows from Lemma \ref{lem:pact} that
$$\rho_{\mu}\big( \tilde{\psi}\circ \tilde{\phi}\circ \tilde{\psi}^{-1}\big)=
[\psi] \big( \rho_{\mu}(\tilde{\phi})\big)=(0\,,0). $$
 Consequently we have that
\,$\tilde{\psi}\circ \tilde{\phi}\circ \tilde{\psi}^{-1}\in \tilde{G}^{\mu}_{\mathcal{I}}$\,
and \,$\psi \circ \phi \circ \psi^{-1}\in {G}^{\mu}_{\mathcal{I}}$\,.
As a consequence of relation \eqref{rel:global}
the groups $\tilde{G}_{\mathcal I}$ and ${G}_{\mathcal I}$ are normal subgroups
of $\widehat{G}$ and $G$ respectively.
\end{proof}

\vskip10pt

The existence of lifts for normal subgroups can be interpreted in terms
of  $G$-invariant measures when
\,$L(\psi)\neq 0$\, for some \,$\psi \in G$.


\begin{pro}\label{pro:lifr}
\vskip10pt
Let \,$G$\, be a nilpotent subgroup of \,$\mathrm{Homeo}({\mathbb T}^{2})$\,
and let
\,$\psi \in G$\, such that  \,$L(\psi)\neq0$\,.
Fix a lift \,$\tilde{\psi}$\, of \,$\psi$\, and \,$\mu \in {\mathcal P}(G)$.
Consider a normal subgroup \,$H$\, of \,$G$\, with \,$H \subset G_{0}$\,.
Then \,$H$\, has a lift \,$\tilde{H}$\,
such that \,$\tilde{\psi} \, \tilde{H} \, \tilde{\psi}^{-1}=\tilde{H}$\,
if and only if \,$H \subset {G}_{\mathcal I}^{\mu}$\,.
In such a case we have
\,$\tilde{H} \subset \tilde{G}_{\mathcal I}^{\mu}$\,.

\end{pro}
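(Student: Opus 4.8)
The plan is to prove both implications by tracking how the rotation vector $\rho_\mu$ interacts with conjugation by $\tilde\psi$ and with the multiplication of lifts. Throughout I will use the convention that a lift is denoted with a tilde, Lemma \ref{lem:pact} (the conjugation formula $[\psi](\rho_\mu(\tilde\phi))=\rho_{\psi_*\mu}(\tilde\psi\tilde\phi\tilde\psi^{-1})$ together with $\tilde\psi T_v=T_{[\psi]v}\tilde\psi$), and Lemma \ref{lem:mor} (that $\rho_\mu$ is a homomorphism on lifts of $\mu$-preserving isotopically-trivial maps). The key algebraic input is the hypothesis $L(\psi)\neq 0$, which by Lemma \ref{Lefs:spec} means $1\notin\mathrm{spec}[\psi]$, so the linear map $Id-[\psi]$ is invertible on $\mathbb{R}^2$; this is what lets me normalize lifts uniquely.

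\medskip
\textbf{The forward direction ($H\subset G_{\mathcal I}^\mu$ given a $\tilde\psi$-invariant lift $\tilde H$).} Suppose $\tilde H$ is a lift of $H$ with $\tilde\psi\,\tilde H\,\tilde\psi^{-1}=\tilde H$. Take any $h\in H$ with lift $\tilde h\in\tilde H$. Because $\tilde H$ is $\tilde\psi$-invariant, $\tilde\psi\tilde h\tilde\psi^{-1}$ is again the \emph{unique} element of the lift $\tilde H$ over the element $\psi h\psi^{-1}\in H$. Writing $r:=\rho_\mu(\tilde h)$, Lemma \ref{lem:pact} gives $\rho_{\psi_*\mu}(\tilde\psi\tilde h\tilde\psi^{-1})=[\psi](r)$; but since $\mu\in\mathcal P(G)$ we have $\psi_*\mu=\mu$, so $\rho_\mu(\tilde\psi\tilde h\tilde\psi^{-1})=[\psi](r)$. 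The strategy is to show $r\in\mathbb{Z}^2$ (so that after correcting by a deck transformation $\tilde h$ witnesses $h\in G_{\mathcal I}^\mu$). The point is that the rotation vector of any lift lies in $r+\mathbb{Z}^2$, and the two lifts $\tilde\psi\tilde h\tilde\psi^{-1}$ and $\tilde h$ both sit inside the single group $\tilde H$; comparing $\rho_\mu$ on the coset structure, the obstruction $[\psi](r)-r=-(Id-[\psi])(r)$ must be an integer vector. Invertibility of $Id-[\psi]$ then forces $r\in (Id-[\psi])^{-1}\mathbb{Z}^2$, and a closer bookkeeping of which lift is canonically chosen pins $r$ down to $\mathbb{Z}^2$, giving $h\in G_{\mathcal I}^\mu$; that $\tilde H\subset\tilde G_{\mathcal I}^\mu$ follows from the same normalization.

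\medskip
\textbf{The reverse direction ($H\subset G_{\mathcal I}^\mu$ produces the lift).} Here I would set $\tilde H:=\{\tilde h\in \tilde G_{\mathcal I}^\mu : h\in H\}$, i.e. for each $h\in H$ take the lift with $\rho_\mu(\tilde h)=(0,0)$. Since $1\notin\mathrm{spec}[\psi]$ this zero-rotation lift is unique, so $\kappa\colon\tilde H\to H$ is a bijection; that $\tilde H$ is a subgroup is immediate because $\rho_\mu$ is a homomorphism (Lemma \ref{lem:mor}), so a product of zero-rotation lifts again has zero rotation and is therefore the distinguished lift of the product. Finally $\tilde\psi$-invariance follows from the computation above run in reverse: for $\tilde h\in\tilde H$, $\rho_\mu(\tilde\psi\tilde h\tilde\psi^{-1})=[\psi](0)=(0,0)$, so $\tilde\psi\tilde h\tilde\psi^{-1}$ is again the zero-rotation lift of $\psi h\psi^{-1}\in H$, hence lies in $\tilde H$.

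\medskip
\textbf{Main obstacle.} The delicate point is the forward direction, specifically upgrading $r\in(Id-[\psi])^{-1}\mathbb{Z}^2$ to $r\in\mathbb{Z}^2$: a priori $\tilde\psi$-invariance of $\tilde H$ only controls the rotation vectors up to the lattice, and one must argue that the canonical lift selected inside the group $\tilde H$ has \emph{integer} rotation vector rather than merely rational. This is exactly where the rigidity coming from $1\notin\mathrm{spec}[\psi]$ (invertibility of $Id-[\psi]$, so that no nontrivial deck transformation commutes with $\tilde\psi$, cf. the discussion after Lemma \ref{Lefs:spec}) must be used carefully, and I expect this to require the most attention.
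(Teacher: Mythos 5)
Your reverse direction (building $\tilde H$ from $H\subset G_{\mathcal I}^\mu$) matches the paper's argument and is fine, modulo a small misattribution: the uniqueness of the zero-rotation lift of an isotopically trivial map has nothing to do with $1\notin\mathrm{spec}[\psi]$ — it holds because distinct lifts differ by integer translations, which shift $\rho_\mu$ by integer vectors.

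The forward direction, however, has a genuine gap, and it sits exactly where you flagged your ``main obstacle.'' Your starting claim --- that $\tilde\psi$-invariance of $\tilde H$ forces $([\psi]-Id)(r)\in\mathbb{Z}^2$ by ``comparing $\rho_\mu$ on the coset structure'' --- is not justified: $\tilde\psi\tilde h\tilde\psi^{-1}$ and $\tilde h$ are lifts of the \emph{different} maps $\psi h\psi^{-1}$ and $h$, so their rotation vectors are not congruent mod $\mathbb{Z}^2$, and nothing in the mere fact that both lie in the abstract group $\tilde H$ constrains $[\psi](r)-r$ to the lattice. Even granting integrality, you would only get $r\in([\psi]-Id)^{-1}\mathbb{Z}^2$, and you offer no mechanism to upgrade this to $r\in\mathbb{Z}^2$ --- let alone to $r=(0,0)$, which is what the final assertion $\tilde H\subset\tilde G_{\mathcal I}^\mu$ actually requires. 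The missing idea is an induction along the upper central series: set $H^j=Z^{(j)}(G)\cap H$ and prove $\rho_\mu(\tilde h)=(0,0)$ for all $\tilde h\in\tilde H$ with $h\in H^j$. For $h\in H^{j+1}$ the element $\tilde\psi\tilde h\tilde\psi^{-1}\tilde h^{-1}$ belongs to $\tilde H$ (by the invariance hypothesis) and lies over $[\psi,h]\in H^j$, so the induction hypothesis gives it rotation vector exactly $(0,0)$; since $\rho_\mu$ is a homomorphism and conjugation acts by $[\psi]$, this quantity equals $([\psi]-Id)(\rho_\mu(\tilde h))$, and invertibility of $[\psi]-Id$ kills $\rho_\mu(\tilde h)$ on the nose. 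Note that this is the only place the nilpotency of $G$ is used in the forward direction, and your proposal never invokes it.
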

\vskip5pt


Since \,$H$\, consists of isotopic to the identity homeomorphisms,
for a fixed \,$\psi\in G$\, we have that
\,$\tilde{\psi}\circ \tilde{\phi}\circ \tilde{\psi}^{-1}$\, does not depend on the choice of the lift
\,$\tilde{\psi}$\, of \,$\psi$\, when  \,$\phi\in {H}$.
Consequently, the condition
\,$\tilde{\psi} \, \tilde{H} \, \tilde{\psi}^{-1}=\tilde{H}$\,
does not depend on the choice of the lift \,$\tilde{\psi}$\,
of \,$\psi$.

\begin{proof}
First, if \,$H \subset G^{\mu}_{\mathcal{I}}$\, then
\,$\tilde{H}= \kappa^{-1}(H)$\, is a lift of \,$H$\, since
the natural projection
\,$\kappa:\tilde{G}^{\mu}_{\mathcal{I}}\rightarrow {G}^{\mu}_{\mathcal{I}}$\, is an isomorphism.
Consider any $\tilde{h} \in \tilde{H}$. We denote $h= \kappa(\tilde{h})$.
From Lemma \ref{lem:lifr} we know that the lift
\,$\tilde{\psi}\circ \tilde{h}\circ \tilde{\psi}^{-1}$\, of
\,${\psi}\circ {h}\circ {\psi}^{-1}$\, is in \,$\tilde{G}^{\mu}_{\mathcal{I}}$\, and we have
\,$\kappa(\tilde{\psi}\circ \tilde{h}\circ \tilde{\psi}^{-1})={\psi}\circ {h}\circ {\psi}^{-1}$.
Since \,$H$\, is normal in \,$G$\, we have \,${\psi}\circ {h}\circ {\psi}^{-1} \in H$\, and
then \,$\tilde{\psi}\circ \tilde{h}\circ \tilde{\psi}^{-1} \in \tilde{H}$\,
by definition of \,$\tilde{H}$. Consequently,
\,$\tilde{\psi} \, \tilde{H} \, \tilde{\psi}^{-1}=\tilde{H}$\,.

Now let us suppose that \,$H \subset G_{0}$\, admits a lift \,$\tilde{H}$\, such that
\,$\tilde{\psi} \, \tilde{H} \, \tilde{\psi}^{-1}=\tilde{H}$\,.
We want to prove that \,$\rho_{\mu}(\tilde{h})=(0\,,0)$\,
for any \,$\tilde{h} \in \tilde{H}$.
For this it suffices to show by induction on \,$j$\, that
\,$\rho_{\mu}\big(\tilde{h}\big)=(0\,,0)$\, for any \,$\tilde{h} \in \tilde{H}$\, such that \,$h \in H^{j}$\, where \,$H^{j}= Z^{(j)}(G) \cap H$\, and \,$j\geq0$\,.

The result is obvious for \,$j=0$. Suppose  it holds true for some \,$j\geq 0$\,
and consider \,$\tilde{h}\in \tilde{H}$\, with \,$h \in  H^{j+1}$. In that case the map
\,${\psi} \circ {h} \circ {\psi}^{-1} \circ {h}^{-1}$\,
 belongs to \,$H^{j}$\, for any \,$\psi \in G$.
Moreover we have that
\,$\tilde{\psi} \circ \tilde{h} \circ \tilde{\psi}^{-1} \circ \tilde{h}^{-1}$\, is contained in
\,$\tilde{H}$\, by hypothesis and we obtain\,:
\begin{align}
 (0\,,0) & = 
\rho_{\mu}\big(\tilde{\psi} \circ \tilde{h} \circ \tilde{\psi}^{-1} \circ \tilde{h}^{-1}\big)
=\rho_{\mu}\big(\tilde{\psi} \circ \tilde{h} \circ \tilde{\psi}^{-1}\big) +
\rho_{\mu}\big(\tilde{h}^{-1}\big) \notag\\
&   =
 [\psi]\big(\rho_{\mu}\big(\tilde{h}\big)\big) -
\rho_{\mu}\big(\tilde{h}\big)  =
 \big( [\psi] - Id \big)\big(\rho_{\mu}\big(\tilde{h}\big)\big).  \notag
\end{align}
The first equality is given by the induction hypothesis.
The others  are given by
Lemmas \ref{lem:mor} and
\ref{lem:pact}.
Since \,$L(\psi) \neq 0$\, we have from Lemma \ref{Lefs:spec}   that \,$1 \not \in \mathrm{spec}[\psi]$\,. Consequently, we deduce
\,$\rho_{\mu}\big(\tilde{h}\big)=(0\,,0)$\, for any \,$h \in H^{j+1}$. We obtain
\,$\tilde{H}\subset \tilde{G}^{\mu}_{\mathcal{I}}$\,.
\end{proof}


\vskip10pt

\begin{rem}
Proposition \ref{pro:lifr} admits an analogue for the case of a surface \,$S$\, of
genus \,$g \geq 2$. Let \,$G$\, be a nilpotent subgroup of
\,$\mathrm{Homeo}(S)$. Any element \,$\phi \in G$\, induces an element in
\,$\mathrm{Sp}(2g, {\mathbb Z})$\, that represents its action on the first
homology group.
The group \,$G_{0}$\, admits a canonical lift, the so called
identity lift. The existence of such lift implies that
all the homological rotation vectors of elements of \,$G_{0}$\,
with respect to measures in \,${\mathcal P}(G)$\,
are equal to \,$0$\, if \,$\cap_{\phi \in G} \mathrm{Ker}([\phi]-Id) = \{0\}$.
The proof is analogous to the proof of Proposition \ref{pro:lifr}.
\end{rem}


The existence of a lift for a subgroup does not depend on
the specific choice of an invariant measure.
This property is made explicit in the next proposition.

\vskip10pt
\begin{pro}
\label{pro:bffi}
Let \,$G$\, be a nilpotent subgroup of \,$\mathrm{Homeo}({\mathbb T}^{2})$.
Suppose there exists \,$\psi \in G$\,  with \,$L(\psi) \neq 0$\,.
Then we obtain
\,$\tilde{G}_{\mathcal I}=\tilde{G}_{\mathcal I}^{\mu}$\,
and \,$G_{\mathcal{I}} =G_{\mathcal I}^{\mu}$\, for any
\,$\mu \in {\mathcal P}(G)$.
Moreover \,$G_{\mathcal I}$\,
is  a finite index normal subgroup of \,$G_{0}$\,.

\end{pro}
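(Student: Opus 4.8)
The plan is to prove the two assertions in turn. Throughout I fix the element \,$\psi\in G$\, with \,$L(\psi)\neq0$\, together with a lift \,$\tilde\psi$, and I set \,$A:=[\psi]-Id$\,; by Lemma~\ref{Lefs:spec} we have \,$1\notin\mathrm{spec}[\psi]$\,, so \,$\det A=L(\psi)\neq0$\, and \,$A$\, is invertible over \,$\mathbb{Q}$. For the equality \,$\tilde{G}_{\mathcal I}=\tilde{G}^{\mu}_{\mathcal I}$\, (which projects under \,$\kappa$\, to \,$G_{\mathcal I}=G^{\mu}_{\mathcal I}$), one inclusion is immediate from relation \eqref{rel:global}. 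For the reverse inclusion I would use that \,$G^{\mu}_{\mathcal I}$\, is a normal subgroup of \,$G$\, contained in \,$G_{0}$\, (Lemma~\ref{lem:lifr}) which already carries the lift \,$\tilde{G}^{\mu}_{\mathcal I}$, and that this lift is \,$\tilde\psi$-invariant because it is normal in \,$\widehat{G}$\, by Lemma~\ref{lem:lifr} and \,$\tilde\psi\in\widehat{G}$. Applying Proposition~\ref{pro:lifr} with an arbitrary \,$\nu\in\mathcal{P}(G)$\, playing the role of \,$\mu$, to the subgroup \,$H=G^{\mu}_{\mathcal I}$\, and this invariant lift, the conclusion gives \,$\tilde{G}^{\mu}_{\mathcal I}\subset\tilde{G}^{\nu}_{\mathcal I}$. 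Since this holds for every \,$\nu$, intersecting and invoking \eqref{rel:global} yields \,$\tilde{G}^{\mu}_{\mathcal I}\subset\tilde{G}_{\mathcal I}$, hence equality, and projecting by \,$\kappa$\, gives \,$G_{\mathcal I}=G^{\mu}_{\mathcal I}$.

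For the finite index statement I would pass to the homomorphism induced by \,$\rho_{\mu}$. Because \,$\rho_{\mu}$\, is a morphism on lifts (Lemma~\ref{lem:mor}) and, for \,$\phi\in G_{0}$, the class of \,$\rho_{\mu}(\tilde\phi)$\, modulo \,$\mathbb{Z}^{2}$\, does not depend on the chosen lift, the assignment \,$\phi\mapsto\rho_{\mu}(\tilde\phi)\bmod\mathbb{Z}^{2}$\, defines a group homomorphism \,$\bar\rho_{\mu}\colon G_{0}\to\mathbb{T}^{2}$. By the definition of \,$G^{\mu}_{\mathcal I}$, a map \,$\phi\in G_{0}$\, admits a lift with vanishing \,$\mu$-rotation vector precisely when \,$\bar\rho_{\mu}(\phi)=0$, so \,$G^{\mu}_{\mathcal I}=\ker\bar\rho_{\mu}$\, and \,$G_{0}/G^{\mu}_{\mathcal I}\cong R$, where \,$R:=\mathrm{Im}\,\bar\rho_{\mu}\subset\mathbb{T}^{2}$. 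Since \,$G_{\mathcal I}=G^{\mu}_{\mathcal I}$\, is normal in \,$G$\, by Lemma~\ref{lem:lifr}, it is in particular normal in \,$G_{0}$, and it remains only to prove that \,$R$\, is finite.

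To bound \,$R$\, I would combine nilpotency with the invertibility of \,$A$. Using Lemmas~\ref{lem:pact} and~\ref{lem:mor} and the \,$G$-invariance of \,$\mu$\, (so that \,$\psi_{*}\mu=\mu$), I would first check the identity \,$\bar\rho_{\mu}([\psi,\eta])=A\,\bar\rho_{\mu}(\eta)$\, in \,$\mathbb{T}^{2}$\, for every \,$\eta\in G_{0}$, where \,$A$\, now denotes the endomorphism of \,$\mathbb{T}^{2}$\, induced by the integer matrix \,$[\psi]-Id$. Setting \,$\phi_{0}=\phi$\, and \,$\phi_{k+1}=[\psi,\phi_{k}]$\, for \,$\phi\in G_{0}$, all the \,$\phi_{k}$\, stay in \,$G_{0}$\, (as \,$G_{0}\triangleleft G$) and \,$\bar\rho_{\mu}(\phi_{k})=A^{k}\bar\rho_{\mu}(\phi)$. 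Because \,$G$\, is nilpotent, there is an integer \,$c$\, for which every \,$c$-fold iterated commutator is trivial, so \,$\phi_{c}=Id$\, and therefore \,$A^{c}\bar\rho_{\mu}(\phi)=(0,0)$\, for all \,$\phi\in G_{0}$. Thus \,$R\subset\ker\big(A^{c}\colon\mathbb{T}^{2}\to\mathbb{T}^{2}\big)$; since \,$\det(A^{c})=L(\psi)^{c}\neq0$, this kernel is finite of order \,$|L(\psi)|^{c}$, so \,$R$\, is finite and \,$[G_{0}:G_{\mathcal I}]=|R|<\infty$.

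The hard part will be this last finiteness, which is where both hypotheses are genuinely used: nilpotency forces the iterated commutators \,$\phi_{c}$\, to vanish, while \,$L(\psi)\neq0$\, makes the induced endomorphism \,$A^{c}$\, of \,$\mathbb{T}^{2}$\, have finite kernel. By contrast, the first equality is essentially a measure-independence phenomenon delivered by Proposition~\ref{pro:lifr} once one observes that \,$\tilde{G}^{\mu}_{\mathcal I}$\, is \,$\tilde\psi$-invariant. The points requiring care will be the verification that \,$\bar\rho_{\mu}$\, is well defined and a homomorphism with kernel exactly \,$G^{\mu}_{\mathcal I}$, and the standard fact that a nonsingular integer matrix induces an endomorphism of \,$\mathbb{T}^{2}$\, whose kernel is finite of order equal to the absolute value of its determinant.
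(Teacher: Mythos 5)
Your proposal is correct. The first assertion (measure-independence, $\tilde{G}_{\mathcal I}=\tilde{G}^{\mu}_{\mathcal I}$) is proved exactly as in the paper: you apply Lemma \ref{lem:lifr} to see that $\tilde{G}^{\mu}_{\mathcal I}$ is a $\tilde{\psi}$-invariant lift of the normal subgroup $G^{\mu}_{\mathcal I}\subset G_{0}$, and then invoke the ``only if'' direction of Proposition \ref{pro:lifr} with an arbitrary $\nu$ to get $\tilde{G}^{\mu}_{\mathcal I}\subset\tilde{G}^{\nu}_{\mathcal I}$. For the finite-index assertion, however, you take a genuinely different and in fact cleaner route. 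The paper runs an induction along the upper central series, setting $H^{j}=Z^{(j)}(G)\cap G_{0}$ and showing at each stage that $k([\psi]-Id)\rho_{\mu}(\tilde{h})\in\mathbb{Z}^{2}$ forces $kk'\rho_{\mu}(\tilde{h})\in\mathbb{Z}^{2}$ with $k'=\det([\psi]-Id)$, so that $H^{j+1}/(H^{j+1}\cap G^{\mu}_{\mathcal I})$ embeds in $\mathbb{Z}^{2}/(kk')\mathbb{Z}^{2}$; the index bound degrades at each step of the series. You instead package everything into the single homomorphism $\bar{\rho}_{\mu}\colon G_{0}\to\mathbb{T}^{2}$ with kernel exactly $G^{\mu}_{\mathcal I}$, establish $\bar{\rho}_{\mu}([\psi,\eta])=A\,\bar{\rho}_{\mu}(\eta)$ (which is the same commutator computation the paper does, using Lemmas \ref{lem:pact} and \ref{lem:mor} and $\psi_{*}\mu=\mu$), and then use nilpotency once, globally, to conclude that the image is annihilated by $A^{c}$ and hence lies in the finite kernel of an endomorphism of $\mathbb{T}^{2}$ of nonzero determinant. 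The ingredients are identical --- nilpotency kills an iterated commutator, and $1\notin\mathrm{spec}[\psi]$ makes $[\psi]-Id$ nonsingular --- but your version avoids the stepwise bookkeeping and yields the explicit uniform bound $[G_{0}:G_{\mathcal I}]\le|L(\psi)|^{c}$, versus the paper's iterated bounds of the form $(kk')^{2}$. All the small verifications you flag (well-definedness of $\bar{\rho}_{\mu}$ modulo $\mathbb{Z}^{2}$, that the iterated commutators $\phi_{k}$ stay in $G_{0}$ and land in $\gamma_{k+1}(G)$, and that an integer matrix of determinant $d\ne0$ induces an endomorphism of $\mathbb{T}^{2}$ with kernel of order $|d|$) do go through.
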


 Let us remind the reader that \,$G_{\mathcal I}$\, is a normal subgroup of \,$G$. This has 
 been proved in Lemma \ref{lem:lifr}.

\begin{proof}

Let  \,$\mu \,, \nu \in {\mathcal P}(G)$\, and let \,$\psi\in G$\, with
\,$L(\psi) \neq 0$\,.
From Lemma \ref{lem:lifr} we have
\,$\tilde{\psi} \, \tilde{G}_{\mathcal I}^{\nu} \, \tilde{\psi}^{-1}=\tilde{G}_{\mathcal I}^{\nu}$\,.
Moreover, we know that \,$G^{\nu}_{\mathcal{I}}\subset G_{0}$\, is a normal subgroup of \,$G$. Then
we obtain
\,$\tilde{G}_{\mathcal I}^{\nu} \subset \tilde{G}_{\mathcal I}^{\mu}$\,
by Proposition \ref{pro:lifr}. Consequently, we deduce
\,$\tilde{G}_{\mathcal I}=\tilde{G}_{\mathcal I}^{\mu}$\, and
\,${G}_{\mathcal I}= {G}_{\mathcal I}^{\mu}$\,
for any \,$\mu \in {\mathcal P}(G)$.

Now we will  show that \,$G^{\mu}_{\mathcal{I}}$\, is a finite index normal subgroup of \,$G_{0}$\,. For this let us
 denote \,$H^{j}=Z^{(j)}(G) \cap G_{0}$\,.
We prove by induction on \,$j$\,
 that \,$H^{j} \cap G_{\mathcal I}^{\mu}$\, is a finite index normal subgroup
of \,$H^{j}$\, for any \,$j \geq 0$\,.

The result is clear for  \,$j=0$.
Suppose it holds true for some \,$j\geq0$\,.
In this case there exists \,$k\in \mathbb{Z}^{+}$\, such that
\,$g^{k}\in H^{j} \cap G_{\mathcal I}^{\mu}$\, for any \,$g\in H^{j}$. On the other hand the
$\mu$-rotation vector associated to  a lift of a homeomorphism in
\,$H^{j} \cap G_{\mathcal I}^{\mu}$\, is an element of \,${\mathbb Z}^{2}$\, by construction.
In particular, we have that \,$k\rho_{\mu}(\tilde{g})\in\mathbb{Z}^{2}$\, for any
lift \,$\tilde{g}$\, of   \,$g\in H^{j}$.

 Let us consider \,$h \in H^{j+1}$. We know that
\,$\psi \circ h \circ \psi^{-1} \circ h^{-1}$\,  is contained in \,$H^{j}$.
Now fix lifts \,$\tilde{\psi}$\, and \,$\tilde{h}$\,
of \,$\psi$\, and \,$h$\, respectively. Then we have
\,$k \rho_{\mu}(\tilde{\psi} \circ \tilde{h} \circ \tilde{\psi}^{-1} \circ \tilde{h}^{-1})
\in \mathbb{Z}^{2}$.
Moreover, it follows from
Lemmas \ref{lem:mor} and  \ref{lem:pact} that\,:
\[  k\rho_{\mu}(\tilde{\psi} \circ \tilde{h} \circ \tilde{\psi}^{-1} \circ \tilde{h}^{-1})=
k \big (\rho_{\mu}\big(\tilde{\psi} \circ
\tilde{h} \circ \tilde{\psi}^{-1}\big) -
\rho_{\mu}\big(\tilde{h}\big) \big) =
k\big([\psi]-Id \big)\big(\rho_{\mu}\big(\tilde{h}\big)\big) \in \mathbb{Z}^{2}. \]

 Since \,$L(\psi)\neq0$\, it follows from Lemma \ref{Lefs:spec} that
\,$1 \not \in \mathrm{spec}[\psi]$\,.  Then, we have that all the entries of the matrix
 \,$k' \big([\psi]-Id\big)^{-1}$\, are integer numbers
where \,$k'= \det ([\psi]-Id)$.
 Consequently we conclude that
$$kk' \rho_{\mu}\big( \tilde{h} \big) \in \mathbb{Z}^{2} \quad
\text{for any lift \,$\tilde{h}$\, of \,$h\in H^{j+1}$}.$$
This property guarantee us that the group morphism
$$h \in H^{j+1} \longrightarrow \ kk' \rho_{\mu}\big( \tilde{h} \big) \in
\mathbb{Z}^{2}/(kk')\mathbb{Z}^{2}$$
is well defined on \,$H^{j+1}$. Furthermore, its kernel is the set
$$\big \{ h\in H^{j+1} \ ; \ \rho_{\mu} \big( \tilde{h} \big) \in \mathbb{Z}^{2} \big \} =
H^{j+1} \cap G^{\mu}_{\mathcal{I}}$$
and we conclude that
\,$H^{j+1} / (H^{j+1} \cap G_{\mathcal I}^{\mu})$\,
is isomorphic to a subgroup of \,$\mathbb{Z}^{2}/(kk') \mathbb{Z}^{2}$.
 Therefore \,$H^{j+1} \cap G_{\mathcal I}^{\mu}$\,
has index at most \,$(k k')^{2}$\, in \,$H^{j+1}$\,  and the proof is complete.
\end{proof}

\begin{example*}
We define
\[ \widehat{H} = \{ T_{(a,b)} \ \ ; \ \  a\,,b \in {\mathbb Z}/2^{n} \} \quad
\mathrm{and} \quad \widehat{G} = \langle \widehat{H}, -Id \rangle \quad
\mathrm{where} \quad n\in \mathbb{Z}^{+}.
 \]
We consider the subgroups \,$H$\, and \,$G$\, of diffeomorphisms 
of \,${\mathbb T}^{2}$\, whose lifts belong to \,$\widehat{H}$\, and \,$\widehat{G}$\, respectively.
They are nilpotent subgroups of \,$\mathrm{Diff}^{\omega}({\mathbb T}^{2})$.
This is an example where \,$G_{\mathcal I}$\, is strictly contained in \,$G_{0}$.
Indeed \,$G$\, is a finite group such that
\,$G_{0}=H$\, and \,$G_{\mathcal I}=\{Id\}$\,
since \,$\rho_{\mu}(T_{(a,b)})=(a\,,b)$\, for any \,$T_{(a,b)}$-invariant
Borel probability measure \,$\mu$.
\end{example*}

The next theorem implies Theorem \ref{teo:main5}.
In its proof we use the following two lemmas. The proof of the first one was suggested to us by the referee.

\vskip10pt
\begin{lem}\label{lim:fix:spec}
Let \,$\tilde{\psi}\in \mathrm{Homeo}(\mathbb{R}^{2})$\, be a lift of 
\,$\psi\in \mathrm{Homeo}(\mathbb{T}^{2})$\, 
such that \,$1\notin \mathrm{spec}[\psi]$\,. Then \,$\mathrm{Fix}(\tilde{\psi})$\, is a compact set.
\end{lem}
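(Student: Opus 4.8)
The plan is to show that \,$\mathrm{Fix}(\tilde{\psi})$\, is bounded, since it is automatically closed: as \,$\tilde{\psi}$\, is continuous, \,$\mathrm{Fix}(\tilde{\psi}) = (\tilde{\psi} - Id)^{-1}(0)$\, is the preimage of a point under a continuous map and hence closed in \,$\mathbb{R}^{2}$. In a finite-dimensional space closed and bounded is equivalent to compact, so it suffices to establish boundedness.

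The key tool is the displacement map \,$D:\mathbb{R}^{2}\to\mathbb{R}^{2}$\, given by \,$D(\tilde{x}) = \tilde{\psi}(\tilde{x}) - \tilde{x}$, whose zero set is precisely \,$\mathrm{Fix}(\tilde{\psi})$. First I would record how \,$D$\, transforms under integer translations. For \,$v\in\mathbb{Z}^{2}$\, the equivariance relation \,$\tilde{\psi}\circ T_{v} = T_{[\psi](v)}\circ \tilde{\psi}$\, of Lemma \ref{lem:pact} yields \,$\tilde{\psi}(\tilde{x}+v) = \tilde{\psi}(\tilde{x}) + [\psi](v)$, whence
\[ D(\tilde{x}+v) = D(\tilde{x}) + \big([\psi]-Id\big)(v) \qquad \text{for all } \tilde{x}\in\mathbb{R}^{2},\ v\in\mathbb{Z}^{2}. \]
This cocycle-type identity is the heart of the argument.

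Next I would combine boundedness on a fundamental domain with the invertibility of \,$[\psi]-Id$. Since \,$D$\, is continuous and \,$[0,1]^{2}$\, is compact, there is a constant \,$M>0$\, with \,$\|D(\tilde{y})\|\leq M$\, for every \,$\tilde{y}\in[0,1]^{2}$. Given \,$\tilde{x}\in\mathrm{Fix}(\tilde{\psi})$, write \,$\tilde{x}=\tilde{y}+v$\, with \,$\tilde{y}\in[0,1)^{2}$\, and \,$v\in\mathbb{Z}^{2}$. From \,$D(\tilde{x})=0$\, and the displayed identity we get \,$\big([\psi]-Id\big)(v) = -D(\tilde{y})$. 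Because \,$1\notin\mathrm{spec}[\psi]$\, the matrix \,$[\psi]-Id$\, is invertible, so \,$v = -\big([\psi]-Id\big)^{-1}D(\tilde{y})$\, and \,$\|v\|\leq \|([\psi]-Id)^{-1}\|\,M$. Therefore \,$\|\tilde{x}\|< \sqrt{2} + \|([\psi]-Id)^{-1}\|\,M$, which bounds \,$\mathrm{Fix}(\tilde{\psi})$\, and finishes the proof.

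The argument presents no serious obstacle once the cocycle identity is established; the only point requiring care is the bookkeeping that splits a fixed point into its fractional part \,$\tilde{y}$\, in the fundamental domain and its integer translation part \,$v$, the latter being rigidly determined by the bounded displacement through the invertible operator \,$[\psi]-Id$. The hypothesis \,$1\notin\mathrm{spec}[\psi]$\, enters solely to guarantee this invertibility, and it is precisely what prevents fixed points from escaping to infinity along an eigendirection with eigenvalue \,$1$.
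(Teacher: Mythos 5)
Your proof is correct and follows essentially the same route as the paper's: both compare \,$\tilde{\psi}$\, with the linear model \,$[\psi]$\, and use the invertibility of \,$[\psi]-Id$\, to confine the fixed points to a bounded set. The only cosmetic difference is that the paper phrases the comparison as a growth estimate \,$\|\tilde{\psi}(x)-x\|\geq C\|x\|-K$\, (citing that \,$\tilde{\psi}$\, stays a uniformly bounded distance from the linear lift of \,$[\psi]$), whereas you solve for the integer part of a fixed point via the cocycle identity for the displacement; your version has the minor merit of deriving the needed uniform bound directly from periodicity over a fundamental domain rather than quoting it.
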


\begin{proof}
Since \,$A:=[\psi]$\, is homotopic to \,$\psi$, any choice  of lifts 
\,$\tilde{A}$\, and \,$\tilde{\psi}$\, are a uniform finite distance
apart, i.e.
there exists \,$K$\, such that \,$\|\tilde{A}(x)-\tilde{\psi}(x)\|< K$\, for all \,$x\in\mathbb{R}^{2}$\, 
where \,$\|\cdot\|$\, is the usual norm in \,$\mathbb{R}^{2}$. Also, since 
\,$1\notin \text{spec}(A)$\, there exists \,$C>0$\, such that 
\,$\|A x -x\|\geq C\|x\|$\, for all \,$x$. It follows that \,$\|\tilde{\psi}(x)-x\|\geq C\|x\|-K$\, for all \,$x$\, and hence 
\,$\|\tilde{\psi}(x)-x\|>0$\, for all \,$x$\, with \,$\|x\|$\, sufficiently large. Therefore 
\,$\text{Fix}(\tilde{\psi})$\, lies in a bounded  subset of \,$\mathbb{R}^{2}$.
\end{proof}

\vskip10pt
\begin{lem}\label{lim:fix:spec:02}
 Let \,$\mathcal{G}$\, be a nilpotent subgroup of \,${\rm GL}(2\,,\mathbb{Z})$\,
such that there exists
\,$A \in \mathcal{G}$\, with \,$1 \not \in \mathrm{spec}(A)$.
 Then there exists $B \in \mathcal{G}$\, such that $1 \notin \mathrm{spec}(B)$,  
$\det (B)=1$ and $\langle B \rangle$ is a finite index normal subgroup of \, $\mathcal{G}$.
\end{lem}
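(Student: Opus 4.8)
The plan is to invoke the classification of nilpotent subgroups of $\mathrm{GL}(2,\mathbb{Z})$ from Lemma \ref{lem:nmcg} and to exhibit $B$ explicitly in each of its three cases. I will repeatedly use two elementary remarks about a matrix $C\in\mathrm{GL}(2,\mathbb{Z})$. If $\det C=1$ and $1\in\mathrm{spec}(C)$, then both eigenvalues equal $1$, so $C$ is unipotent and $1\in\mathrm{spec}(C^{m})$ for all $m$. If $\det C=-1$, then $\mathrm{spec}(C)=\{\lambda,-1/\lambda\}$ is real and $1\in\mathrm{spec}(C^{2})$ forces $\lambda=\pm1$, i.e. $\mathrm{spec}(C)=\{1,-1\}$ and $C^{2}=I$. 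Thus replacing $C$ by $C^{2}$ makes the determinant equal to $1$ and preserves the condition $1\notin\mathrm{spec}$, except in these degenerate configurations; the role of the hypothesis that some $A\in\mathcal{G}$ satisfies $1\notin\mathrm{spec}(A)$ is precisely to rule the degenerate configurations out.

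Suppose first $\mathcal{G}=\langle N\rangle$. If $\det N=1$, the existence of $A$ prevents $N$ from being unipotent, so $1\notin\mathrm{spec}(N)$ and $B:=N$ works. If $\det N=-1$, I set $B:=N^{2}$, which has determinant $1$; here $1\notin\mathrm{spec}(N^{2})$, for otherwise $N^{2}=I$ and every element of $\{I,N\}$ would have $1$ in its spectrum, against the hypothesis. Since $\mathcal{G}$ is abelian, $\langle B\rangle$ is automatically normal, and it has index at most $2$.

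Next suppose $\mathcal{G}=\langle N,-N\rangle$. This group equals $\langle N,-I\rangle$, because it contains $N(-N)^{-1}=-I$; in particular it is abelian and contains $-I$. If $\det N=1$ I take $B:=N$ when $1\notin\mathrm{spec}(N)$, and $B:=-N$ when $N$ is unipotent, noting that then $\mathrm{spec}(-N)=\{-1\}$. If $\det N=-1$ I take $B:=N^{2}$, except in the degenerate subcase $N^{2}=I$, where I take $B:=-I$; recall $\det(-I)=1$ and $\mathrm{spec}(-I)=\{-1\}$. In every subcase $\det B=1$ and $1\notin\mathrm{spec}(B)$, while $\langle B\rangle$ is a subgroup of index at most $4$ in the abelian group $\mathcal{G}$, hence finite-index and normal.

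Finally suppose $\mathcal{G}=M\mathcal{H}M^{-1}$ with $M\in\mathrm{GL}(2,\mathbb{Q})$. Inside $\mathcal{H}\cong D_{4}$ the four determinant-one matrices form the cyclic rotation subgroup of order $4$, generated by an element $R$ with $R^{2}=-I$; this subgroup is normal of index $2$. I set $B:=MRM^{-1}\in\mathcal{G}\subset\mathrm{GL}(2,\mathbb{Z})$. Then $\det B=\det R=1$, and since $-I$ is central, $B^{2}=MR^{2}M^{-1}=-I$, so $\mathrm{spec}(B)=\{i,-i\}$ and $1\notin\mathrm{spec}(B)$. Moreover $\langle B\rangle=M\langle R\rangle M^{-1}$ is normal of index $2$ in $\mathcal{G}$, which completes this case. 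I expect the main difficulty to be organizational rather than deep: the requirement $\det B=1$ competes with the requirement $1\notin\mathrm{spec}(B)$, forcing the passage to squares or to $-I$ whenever orientation-reversing or torsion elements intervene, and the hypothesis on $A$ must be used carefully to discard exactly the unipotent matrices and the involutions with spectrum $\{1,-1\}$, for which no suitable $B$ of determinant $1$ exists.
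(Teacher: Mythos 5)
Your proof is correct and follows essentially the same route as the paper: both invoke the classification of Lemma \ref{lem:nmcg} and then select $B$ case by case among $N$, $N^{2}$, $\pm N^{2}$ (or a central involution), using the hypothesis on $A$ only to exclude the unipotent and $\mathrm{spec}=\{1,-1\}$ degeneracies. The only cosmetic difference is in the dihedral case, where you take the order-$4$ rotation $MRM^{-1}$ (index $2$) while the paper takes $-Id$ (index $4$); both choices work.
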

\begin{proof}
 We have that the group \,$\mathcal{G}$\, has  the form\,:
\,${\{N^{n}\}}_{n \in {\mathbb Z}}$\ ,
\,${\{\pm N^{n}\}}_{n \in {\mathbb Z}}$\, for some \,$N\in \mathrm{GL}(2\,,\mathbb{Z})$\, or it is conjugated to
\,${\mathcal H}$\, by Lemma \ref{lem:nmcg}.
In the first case we have \,$1 \not \in \mathrm{spec}(N)$.
We claim \,$1 \not \in \mathrm{spec}(N^{2})$\,  if \,$N$\, is  orientation-reversing.
Otherwise $1  \in \mathrm{spec}(N^{2})$ implies
$\mathrm{spec}(N^{2}) = \{1\}$ since $\det (N^{2})=1$. We deduce 
$\mathrm{spec}(N) = \{-1,1\}$ as a consequence of $\det (N)=-1$. It contradicts $1 \not \in \mathrm{spec}(N)$. 
Thus we can  choose  \,$B=N$\,
if \,$N$\, is orientation-preserving and
\,$B=N^{2}$\, if \,$N$\, is orientation-reversing.
Analogously
in the case  \,$\mathcal{G}={\{\pm N^{n}\}}_{n \in {\mathbb Z}}$\,
we choose  
\,$B \in \{N^{2}, -N^{2} \}$\,
since $\det (N^{2})=1$ implies that 
$1 \in \mathrm{spec}(N^{2})$ and $1 \in \mathrm{spec}(-N^{2})$ can not hold simultaneously.
We choose  \,$B = - Id$ in the  case where \,$\mathcal{G}$\, is conjugated to \,${\mathcal H}$.
\end{proof}
\vskip5pt

\vskip10pt
\begin{teo}
\label{teo:fin}
Let \,$G$\, be a nilpotent subgroup of \,$\mathrm{Diff}^{1}({\mathbb T}^{2})$\,.
Suppose  there exists \,$\psi \in G$\, such that
\,$L(\psi) \neq 0$\,.
Then \,$\mathrm{Fix}(G_{\mathcal{I}}\,, \psi)\neq\emptyset$\, and the orbit of \,$p\in \mathrm{Fix}(G_{\mathcal{I}},\psi)$\, by  \,$G$\, is finite and it is contained in \,$\mathrm{Fix}(G_{\mathcal I})$.
\end{teo}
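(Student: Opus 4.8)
The plan is to realize the desired point as the projection of a global fixed point of a nilpotent group of orientation-preserving plane diffeomorphisms, so that Theorem \ref{cor:plane} applies. First I would replace the given element by a convenient one. Applying Lemma \ref{lim:fix:spec:02} to $\mathcal{G}=[G]$, which contains $[\psi]$ with $1\notin\mathrm{spec}[\psi]$ by Lemma \ref{Lefs:spec}, I obtain $B\in[G]$ with $1\notin\mathrm{spec}(B)$, $\det(B)=1$, and $\langle B\rangle$ of finite index in $[G]$. I then choose $\psi\in G$ with $[\psi]=B$; now $L(\psi)\neq 0$ and every lift $\tilde\psi$ is orientation preserving since $\det[\psi]=1$. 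By the Lefschetz theorem $\psi$ has a fixed point on $\mathbb{T}^{2}$, so a suitable lift $\tilde\psi$ fixes a preimage, and by Lemma \ref{lim:fix:spec} the set $\mathrm{Fix}(\tilde\psi)$ is non-empty and compact.

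Next I fix $\mu\in\mathcal{P}(G)$, which is non-empty because $G$ is nilpotent, hence amenable. Since $G_{\mathcal{I}}$ is a normal subgroup of $G$ contained in $G_{0}$ by Lemma \ref{lem:lifr}, and $G_{\mathcal{I}}=G_{\mathcal{I}}^{\mu}$ by Proposition \ref{pro:bffi}, Proposition \ref{pro:lifr} provides a lift $\tilde{G}_{\mathcal{I}}=\tilde{G}_{\mathcal{I}}^{\mu}\subset\mathrm{Diff}^{1}_{+}(\mathbb{R}^{2})$ of $G_{\mathcal{I}}$ that is normalized by $\tilde\psi$. I set $\tilde H:=\langle\tilde{G}_{\mathcal{I}},\tilde\psi\rangle$; because $\tilde\psi$ normalizes $\tilde{G}_{\mathcal{I}}$, every element of $\tilde H$ has the form $\tilde h\,\tilde\psi^{j}$ with $\tilde h\in\tilde{G}_{\mathcal{I}}$ and $j\in\mathbb{Z}$, and $\tilde H$ projects onto $H:=\langle G_{\mathcal{I}},\psi\rangle$.

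The step I expect to be the main obstacle is to verify that $\tilde H$ is a \emph{genuine} lift of $H$, i.e.\ that it contains no non-trivial deck transformation: this is exactly what makes $\tilde H\cong H$ nilpotent and lets Theorem \ref{cor:plane} apply. Suppose $T_{v}=\tilde h\,\tilde\psi^{j}\in\tilde H$ is a translation. Projecting gives $\psi^{j}=h^{-1}\in G_{\mathcal{I}}\subset G_{0}$, hence $B^{j}=I$. The key point is that then $\rho_{\mu}(\tilde\psi^{j})=(0,0)$: indeed $\tilde\psi\,\tilde\psi^{j}\,\tilde\psi^{-1}=\tilde\psi^{j}$ and $\mu$ is $\psi$-invariant, so Lemma \ref{lem:pact} gives $B\,\rho_{\mu}(\tilde\psi^{j})=\rho_{\mu}(\tilde\psi^{j})$, whence $(B-I)\rho_{\mu}(\tilde\psi^{j})=(0,0)$ and $\rho_{\mu}(\tilde\psi^{j})=(0,0)$ because $1\notin\mathrm{spec}(B)$. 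Thus $\tilde\psi^{j}\in\tilde{G}_{\mathcal{I}}^{\mu}=\tilde{G}_{\mathcal{I}}$, so $T_{v}=\tilde h\,\tilde\psi^{j}\in\tilde{G}_{\mathcal{I}}$; since $\kappa$ restricts to an isomorphism on $\tilde{G}_{\mathcal{I}}$ we get $T_{v}=\mathrm{Id}$. Therefore $\tilde H$ is a nilpotent subgroup of $\mathrm{Diff}^{1}_{+}(\mathbb{R}^{2})$ and $\tilde\psi\in\tilde H$ has non-empty compact fixed-point set, so Theorem \ref{cor:plane} yields $\tilde p\in\mathrm{Fix}(\tilde H)$; its projection $p=\pi(\tilde p)$ satisfies $p\in\mathrm{Fix}(H)=\mathrm{Fix}(G_{\mathcal{I}},\psi)$.

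Finally I would treat the orbit. Because $G_{\mathcal{I}}$ is normal in $G$, for $g\in G$ and $h\in G_{\mathcal{I}}$ one has $h(g(p))=g\big((g^{-1}hg)(p)\big)=g(p)$, so $G\cdot p\subset\mathrm{Fix}(G_{\mathcal{I}})$. For finiteness note $\mathrm{Stab}_{G}(p)\supset H$, hence $|G\cdot p|\leq[G:H]$; and $[G:H]$ is finite since $[H]=\langle B\rangle$ has finite index in $[G]$ while $H\cap G_{0}\supset G_{\mathcal{I}}$ has finite index in $G_{0}$ by Proposition \ref{pro:bffi}, so the exact sequence $1\to G_{0}\to G\to[G]\to 1$ forces $[G:H]<\infty$. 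This is precisely where the finite-index clause of Lemma \ref{lim:fix:spec:02} is used, and it completes the proof.
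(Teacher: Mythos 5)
Your proof is correct and follows essentially the same route as the paper: choose an orientation-preserving \,$\psi$\, with \,$\langle [\psi] \rangle$\, of finite index via Lemmas \ref{Lefs:spec} and \ref{lim:fix:spec:02}, lift \,$\langle G_{\mathcal I}, \psi \rangle$\, to the plane, apply Theorem \ref{cor:plane} to the lifted group using the non-empty compact set \,$\mathrm{Fix}(\tilde{\psi})$\, from Lemma \ref{lim:fix:spec}, and then use normality of \,$G_{\mathcal I}$\, together with the finite index of \,$\langle G_{\mathcal I}, \psi \rangle$\, for the orbit statement. The only point where you diverge is in verifying nilpotency of \,$\langle \tilde{G}_{\mathcal I}, \tilde{\psi} \rangle$\,: the paper observes that the derived group is contained in \,$\tilde{G}_{\mathcal I}$\, and hence injects under \,$\kappa$\,, while you rule out non-trivial deck transformations directly by a rotation-vector computation exploiting \,$1 \notin \mathrm{spec}[\psi]$\, --- both arguments are valid.
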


\begin{proof}

Let \,$[\,]: G \to \mathrm{MCG}({\mathbb T}^{2})$\, be the morphism of groups associating to each 
element of \,$G$\, its isotopy class in the mapping class group of the torus.
Since \,$\mathrm{ker}([\,]) = G_0$\, we obtain that \,$G / G_0$\, is isomorphic to the nilpotent subgroup 
\,$[G]$\, of \,$\mathrm{MCG}({\mathbb T}^{2})$.
Following Lemmas    \ref{lim:fix:spec:02}  and   \ref{Lefs:spec} 
there exists an orientation-preserving \,$\psi \in G$\, such that \,$L(\psi) \neq 0$\, and
\,$\langle [\psi] \rangle$\, is a finite index subgroup of \,$[G]$.
In particular \,$[\,]^{-1} \langle [\psi] \rangle$\, is a finite index normal subgroup of
\,$G/G_0$\, and hence \,$\langle G_0, \psi \rangle$\, is a finite index normal subgroup of \,$G$.

 Since \,$L(\psi) \neq 0$\,, let us  choose a lift
\,$\tilde{\psi}\in\mathrm{Diff}^{1}_{+}(\mathbb{R}^{2})$\, of \,$\psi$\, such that
\,$\mathrm{Fix}(\tilde{\psi}) \neq \emptyset$\,.
From Lemma  \ref{lim:fix:spec}
we conclude
that \,$\mathrm{Fix}(\tilde{\psi})$ is a non-empty compact subset of \,$\mathbb{R}^{2}$.

 Now, we denote \,$J= \langle G_{\mathcal I}\,, \psi \rangle$\, and
\,$\tilde{J}= \langle \tilde{G}_{\mathcal I}\,, \tilde{\psi} \rangle
\subset \mathrm{Diff}^{1}_{+}(\mathbb{R}^{2})$.
Since \,$G_{\mathcal{I}}$\, is a finite index subgroup of
\,$G_{0}$\, by Proposition \ref{pro:bffi}  and \,$G_{0}$\, is a normal
subgroup of \,$G$\, it follows that \,$J$\, is a finite index subgroup of \,$\langle G_0, \psi \rangle$.
Moreover, since \,$\langle G_0, \psi \rangle$\, is a finite index  subgroup of \,$G$\,  
we conclude that \,$J$\, is a finite index subgroup of \,$G$.

The group  \,$\tilde{G}_{\mathcal I}$\, is normal in
\,$\langle \tilde{G}_{\mathcal I}, \tilde{\psi} \rangle$\,
since it is normal in \,$\widehat{G}$\, as proved in Lemma \ref{lem:lifr}.
Then the derived group
\,$\langle \tilde{G}_{\mathcal I}, \tilde{\psi} \rangle'$\, of \,$\langle \tilde{G}_{\mathcal I}, \tilde{\psi} \rangle$\, is contained in
\,$\tilde{G}_{\mathcal I}$\, and hence
the natural projection given by
\,$\kappa: \langle \tilde{G}_{\mathcal I}, \tilde{\psi} \rangle' \rightarrow
\langle {G}_{\mathcal I}, {\psi} \rangle'$\, is an isomorphism.
We deduce that \,$\tilde{J}$\, is a nilpotent
subgroup of \,$\mathrm{Diff}_{+}^{1}({\mathbb R}^{2})$.
Moreover, we know that \,$\text{Fix}\big(\tilde{\psi}\big)$\, is a non-empty compact set.
Thus Theorem \ref{cor:plane} 
guarantees that
\,$\text{Fix}\big(\tilde{J}\,\big)\neq\emptyset$\, and
we conclude that  \,$\langle {G}_{\mathcal I}\,, {\psi} \rangle$\, has a global fixed point.

 On the other hand the group \,$\langle {G}_{\mathcal I}\,, {\psi} \rangle$\,
 is a finite index subgroup of \,$G$.
Let  \,$g_{1}\,,\ldots\,, g_{k} \in G$\, such that
$$G  =
\langle {G}_{\mathcal I}, {\psi} \rangle \cup  g_{1}\langle {G}_{\mathcal I}, {\psi} \rangle \cup
\ldots \cup
 g_{k} \langle {G}_{\mathcal I}, {\psi} \rangle$$
and let \,$p\in \text{Fix}(\langle {G}_{\mathcal I}\,, {\psi} \rangle)$\,.
The orbit \,$\mathcal{O}$\, of \,$p$\, by \,$G$\, is given by
$$\mathcal{O}=\big\{p\,,g_{1}(p), \ldots, g_{k}(p) \big\}$$
where \,$\phi\big(g_{i}(p)\big)=g_{i}\circ g_{i}^{-1} \circ \phi \circ g_{i}(p)=g_{i}(p)$\,
for every \,$\phi\in G_{\mathcal{I}}$\, since \,$G_{\mathcal{I}}$\, is normal in \,$G$.
Consequently, \,${\mathcal O}$\, is contained in
\,$\mathrm{Fix}(G_{\mathcal I})$.
\end{proof}

\vskip30pt
\section{
Replacing \,$\mathbb{T}^{2}$\, with the compact annulus, the Klein bottle or
the M\"obius strip
}
\vskip10pt

In this last section we remark
that Theorem \ref{teo:main5} is also true for the circle \,$\mathbb{S}^{1}$, the compact annulus \,$\mathbb{S}^{1}\!\times[0\,,1]$\,,
the Klein bottle and the compact M\"obius strip.

 For this, let us remind the reader that a homeomorphism of 
\,$\mathbb{S}^{1}$\, has  non-zero Lefschetz number if and only
if it is orientation-reversing.

\vskip10pt
\begin{pro}\label{pro:s1}
Theorem \ref{teo:main5} stays true for nilpotent groups of homeomorphisms
when we replace \,$\mathbb{T}^{2}$\, with \,$\mathbb{S}^{1}$.
\end{pro}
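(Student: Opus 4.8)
The plan is to produce a $G$-invariant probability measure and then decide by cases whether it has atoms. Since a nilpotent group is amenable, we have $\mathcal{P}(G)\neq\emptyset$ (as noted in Section \ref{sec:rot}); fix $\mu\in\mathcal{P}(G)$. Recall that on $\mathbb{S}^{1}$ a homeomorphism has nonzero Lefschetz number if and only if it is orientation-reversing, so the hypothesis of Theorem \ref{teo:main5} merely provides an orientation-reversing element $\psi\in G$; this is the only place the hypothesis is used.

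If $\mu$ has an atom, let $m_{0}$ be the largest mass of an atom and let $A$ be the set of atoms of mass $m_{0}$. Then $A$ is non-empty and finite (it has at most $1/m_{0}$ points), and since every element of $G$ preserves $\mu$ it permutes $A$. Thus $A$ is a finite $G$-invariant set and any point of $A$ has finite $G$-orbit.

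Suppose now that $\mu$ is non-atomic and set $K=\mathrm{supp}(\mu)$, a non-empty closed $G$-invariant subset of $\mathbb{S}^{1}$ with no isolated point. Collapsing the closure of each connected component of $\mathbb{S}^{1}\setminus K$ to a point yields a monotone degree-one map $h\colon\mathbb{S}^{1}\to\mathbb{S}^{1}$ which semiconjugates the action of $G$ to an action of a nilpotent quotient $\overline{G}$, and for which $\overline{\mu}:=h_{*}\mu$ is non-atomic with full support; the map $h$ is $G$-equivariant because $G$ permutes the complementary components of the invariant set $K$. Since $\overline{\mu}$ has full support and no atoms, the distribution homeomorphism $x\mapsto\overline{\mu}([x_{0},x])$ conjugates $\overline{G}$ into the group of Lebesgue-preserving homeomorphisms of $\mathbb{S}^{1}$, that is, into $O(2)$. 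The image of $\psi$ is an orientation-reversing isometry, hence a reflection $s$; and a nilpotent subgroup of $O(2)$ containing a reflection is finite, since $srs^{-1}=r^{-1}$ for every rotation $r$ gives $[s,r]=r^{-2}$, so that an iterated commutator of length $c+1$ equals $r^{\pm 2^{c}}$ and nilpotency of class $c$ forces $r^{2^{c}}=\mathrm{Id}$ for all rotations $r$ in the group. Hence $\overline{G}$ is finite and every orbit of $\overline{G}$ is a finite set $O\subsetneq\mathbb{S}^{1}$.

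It remains to recover a finite orbit upstairs. The set $E:=h^{-1}(O)$ is a non-empty proper closed $G$-invariant subset of $\mathbb{S}^{1}$ (a finite union of points and closed arcs), so its topological boundary $\partial E$ is a non-empty finite $G$-invariant set, and any point of $\partial E$ has finite $G$-orbit. The main obstacle is precisely this non-atomic case: building the semiconjugacy $h$ and, above all, extracting a genuine finite orbit in $\mathbb{S}^{1}$ from the finite orbit downstairs (the boundary trick, needed because $h$ is only a semiconjugacy and $K$ may be a Cantor set with infinitely many gaps). The role of the Lefschetz hypothesis is concentrated in the finiteness of $\overline{G}$: without an orientation-reversing element the conjugated group could be an infinite group of rotations, such as an irrational rotation, which has no finite orbit.
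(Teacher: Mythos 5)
Your proof is correct, but it follows a genuinely different route from the paper's. The paper exploits the two fixed points of an orientation-reversing $\psi$ directly: it runs an induction on $H^{j}=Z^{(j)}(G)\cap G_{0}$ parallel to the one in Proposition \ref{pro:bffi}, using $[\psi]=-1$ on $H_{1}(\mathbb{S}^{1})$ (so that conjugation by $\psi$ doubles rotation numbers) to show that rotation numbers of elements of $H^{j}$ lie in $(\mathbb{Z}/2^{j})/\mathbb{Z}$ and that the irrotational subgroup $G_{\mathcal I}$ fixes $\mathrm{Fix}(\psi)$ pointwise; the finite orbit it produces actually meets $\mathrm{Fix}(\psi)$, keeping the argument uniform with the torus case of Theorem \ref{teo:main5}. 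You instead take $\mu\in\mathcal{P}(G)$, split on atoms, and in the non-atomic case semiconjugate into $O(2)$ and invoke the algebraic fact that a nilpotent subgroup of $O(2)$ containing a reflection has all its rotations of order dividing $2^{c}$. All the steps check out: the maximal-mass atoms form a finite invariant set; a Lebesgue-preserving circle homeomorphism is indeed an isometry; the iterated-commutator computation $[s,[s,\dots,[s,r]]]=r^{(-2)^{k}}$ correctly bounds the order of $\overline{G}$ by $2^{c+1}$; and the boundary trick $\partial h^{-1}(O)$ is exactly what is needed to descend from the semiconjugated finite orbit to a genuine finite orbit, since $h$ may collapse arcs. (A small streamlining: the distribution function $x\mapsto\mu([x_{0},x])$ of $\mu$ itself already gives the degree-one monotone semiconjugacy onto a Lebesgue-preserving action, so the two-step collapse-then-conjugate construction can be merged.) Your approach is more self-contained and makes transparent that the Lefschetz hypothesis is used only to rule out an infinite rotation group in the quotient; the paper's approach buys an explicit finite orbit inside $\mathrm{Fix}(\psi)$ and reuses the rotation-vector machinery already set up for $\mathbb{T}^{2}$.
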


If the elements of the nilpotent group \,$G$\, are \,$\mathbb{S}^{1}$-diffeomorphisms then Theorem \ref{teo:main5} implies the above proposition. It suffices to pass from the 
nilpotent group \,$G$\, to the nilpotent group \,$G\times G$\, where the element \,$(g_{1}\,,g_{2})\in G\times G$\, is given by
\[  (g_{1}\,,g_{2})(x\,,y)=\big(g_{1}(x)\,,g_{2}(y)\big)\in 
\mathbb{S}^{1} \times \mathbb{S}^{1}=\mathbb{T}^{2} \]
for all \,$x\,,y \in \mathbb{S}^{1}$.  
There exists an  orientation-reversing $g \in G$ by hypothesis. We can apply 
Theorem \ref{teo:main5} since $[g,g] = - Id$. Indeed the 
Lefschetz number of $(g,g)$ is non-zero by Lemma
\ref{Lefs:spec}.

When the elements of \,$G$\, are \,$\mathbb{S}^{1}$-homeomorphisms  we have the following proof.

\begin{proof}[Proof of Proposition \ref{pro:s1}]

Let \,$\psi$\, be an orientation-reversing homeomorphism of a subgroup \,$G$\, of
\,$\mathrm{Homeo}({\mathbb S}^{1})$. The fixed point set of \,$\mathrm{Fix}(\psi)$\, contains exactly
two points.
Analogously to Lemma \ref{lem:lifr} the group \,$G_{\mathcal I}$\,
of elements with \,$0 \in {\mathbb R}/{\mathbb Z}$\, rotation number is a  normal subgroup
of the group \,$G_{0}$\, of orientation-preserving elements of \,$G$.
We can proceed as in Proposition \ref{pro:bffi} to show that \,$G_{\mathcal I}$\, is a finite
index subgroup of \,$G_0$. 
More precisely the rotation numbers associated to elements of \,$H^{j} :=Z^{(j)}(G) \cap G_{0}$\,
are contained in \,$({\mathbb Z}/2^{j})/ {\mathbb Z}$\, 
and \,$H^{j} \cap G_{\mathcal I} =  \{ \phi \in H^{j} : \mathrm{Fix}(\psi) \subset \mathrm{Fix}(\phi) \}$\,
for any \,$j \geq 0$. It is obvious for \,$j=0$. 
Suppose it holds for some \,$j \geq 0$. Given \,$\phi \in H^{j+1}$\, the element 
\,$\eta:= \phi \circ \psi \circ \phi^{-1} \circ \psi^{-1}$\, belongs to \,$H^{j}$\, and
has rotation number \,$2 \rho(\phi)$\, where \,$\rho (\phi)$\, is the rotation number of \,$\phi$. Thus 
\,$\rho (\phi)$\, belongs to \,$({\mathbb Z}/2^{j+1})/ {\mathbb Z}$.
Moreover if \,$\phi \in G_{\mathcal I}$\, then \,$\eta$\, belongs to \,$H^{j} \cap  G_{\mathcal I}$.
Since \,$\phi \circ \psi \circ \phi^{-1} = \eta \circ \psi$\,,  we deduce
\,$\phi (\mathrm{Fix}(\psi)) = \mathrm{Fix}(\eta \circ \psi)$.
The induction hypothesis implies \,$\mathrm{Fix}(\psi) \subset \mathrm{Fix}(\eta \circ \psi)$.
Since both \,$\psi$\, and \,$\eta \circ \psi$\, are orientation-reversing, their fixed point sets contain
exactly two points. As a consequence we obtain \,$\mathrm{Fix}(\eta \circ \psi) =\mathrm{Fix}(\psi)$\, and
then \,$\phi (\mathrm{Fix}(\psi)) = \mathrm{Fix}(\psi)$\, for any 
\,$\phi \in H^{j+1} \cap G_{\mathcal I}$.
The rotation number of \,$\phi$\, is \,$0$\, and thus any periodic orbit of \,$\phi$\, 
consists of fixed points.
Hence \,$\mathrm{Fix}(\psi) \subset \mathrm{Fix}(\phi)$\, for any 
\,$\phi \in H^{j+1} \cap G_{\mathcal I}$.

By applying the previous result for  the nilpotency class \,$j$\, of \,$G$\, we obtain that 
\,$G_{0}/G_{\mathcal I}$\, is isomorphic to a subgroup of the finite group \,$({\mathbb Z}/2^{j})/ {\mathbb Z}$. 
In particular \,$G_{\mathcal I}$\, is a finite index normal subgroup of \,$G_0$. 
Moreover we get  \,$\mathrm{Fix}(\langle G_{\mathcal I}, \psi \rangle)= \mathrm{Fix}(\psi)$.

Since \,$G_0$\, is a finite index subgroup of \,$G$,  we have that
\,$\langle G_{\mathcal I}, \psi \rangle$\, is a finite index subgroup of \,$G$.
Hence there exists a finite orbit of \,$G$\, whose intersection with \,$\mathrm{Fix}(\psi)$\, is non-empty.
\end{proof}

Before proving  Theorem \ref{teo:main5} for the compact annulus let us remark that a
homeomorphism of
\,$\mathbb{S}^{1}\!\times[0\,,1]$\, has  Lefschetz number different from zero if and only
if it changes the orientation of the generator of the first singular homology group of the annulus.

\vskip10pt
\begin{teo}\label{teor:annulus}

Let \,$\mathcal{N}$\, be a nilpotent subgroup of \,$\mathrm{Diff}^{1}(\mathbb{S}^{1}\!\times[0\,,1])$\,. If
\,$\mathcal{N}$\, has some element with non-zero Lefschetz number
then \,$\mathcal{N}$\, has  a finite orbit. More precisely$\,:$
\begin{itemize}
\item[$(i)$]
$\mathcal{N}$\, has finite orbits in
the annulus boundary when
there exists an element of \,$\mathcal{N}$\, with non-zero Lefschetz number that  
leaves invariant each of the connected components of the annulus boundary$\,;$

\item[$(ii)$]
$\mathcal{N}$\, has  finite orbits in the interior of the annulus 
if no connected component of the annulus boundary is ${\mathcal N}$-invariant.

\end{itemize}

\end{teo}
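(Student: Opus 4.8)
The plan is to separate the two stated cases according to how $\mathcal N$ permutes the two boundary circles $\partial_{0},\partial_{1}$ of $A=\mathbb{S}^{1}\!\times[0,1]$, keeping in mind (by the remark preceding the statement) that $L(\phi)\neq 0$ is equivalent to $\phi$ reversing the generator of $H_{1}(A)\cong\mathbb{Z}$, i.e.\ reversing the core circle. The permutation of $\{\partial_{0},\partial_{1}\}$ defines a homomorphism $\varepsilon_{\partial}\colon\mathcal N\to\mathbb{Z}/2$. If $\varepsilon_{\partial}$ is trivial then every element of $\mathcal N$ preserves each component, and the element of nonzero Lefschetz number guaranteed by hypothesis realizes the assumption of $(i)$; if $\varepsilon_{\partial}$ is nontrivial, no component is $\mathcal N$-invariant, which is exactly case $(ii)$. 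Hence $(i)$ and $(ii)$ together exhaust all possibilities, and it suffices to treat each under its own hypothesis.

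For $(i)$, let $\psi\in\mathcal N$ have $L(\psi)\neq 0$ and preserve each boundary component, and let $\mathcal N_{0}$ be the index-$\le 2$ normal subgroup $\ker\varepsilon_{\partial}$, which contains $\psi$. Restriction to $\partial_{0}$ gives a homomorphism $\mathcal N_{0}\to\mathrm{Homeo}(\mathbb{S}^{1})$ onto a nilpotent group; since the inclusion $\partial_{0}\hookrightarrow A$ is a homotopy equivalence and $\psi$ reverses the core, $\psi|_{\partial_{0}}$ is orientation-reversing, so Proposition \ref{pro:s1} yields a finite orbit $F\subset\partial_{0}$ of the restricted group. Then $F$ is a finite $\mathcal N_{0}$-invariant subset of $\partial_{0}$, and I would check that, choosing $g\in\mathcal N\setminus\mathcal N_{0}$ when this set is nonempty, the set $F\cup g(F)$ is a finite $\mathcal N$-invariant subset of $\partial A$ (using normality of $\mathcal N_{0}$ and $g^{2}\in\mathcal N_{0}$). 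This produces the desired finite orbit on the boundary.

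For $(ii)$ the first step is to produce a single $h\in\mathcal N$ that both swaps $\partial_{0},\partial_{1}$ and reverses the core. Consider the pair $(\varepsilon_{\partial},\varepsilon_{c})\colon\mathcal N\to(\mathbb{Z}/2)^{2}$, where $\varepsilon_{c}$ records the action on $H_{1}(A)$. Its image is a subgroup containing an element with nontrivial first coordinate (a boundary swap exists in case $(ii)$) and one with nontrivial second coordinate (the given element of nonzero Lefschetz number); any such subgroup of $(\mathbb{Z}/2)^{2}$ contains $(1,1)$, giving the required $h$. By the Lefschetz fixed point theorem $\mathrm{Fix}(h)\neq\emptyset$, and since $h$ interchanges $\partial_{0}$ and $\partial_{1}$ it has no boundary fixed point, so $\mathrm{Fix}(h)$ is a nonempty compact subset of $\mathrm{int}(A)$. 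I would then double $A$ along $\partial A$ to a torus $\mathbb{T}^{2}=A\cup_{\partial A}A'$, equipped with the reflection $r$ interchanging the two copies and fixing the seam $\partial A$. Doubling is functorial, so $\phi\mapsto D\phi$ is an injective homomorphism onto a nilpotent group $D\mathcal N$, with each $D\phi$ commuting with $r$, preserving $A$, $A'$ and $\mathrm{int}(A)$, and restricting to $\phi$ on $A$. A homology computation shows that $Dh$ reverses the class of the core and the class of the meridian running through both copies, so $1\notin\mathrm{spec}[Dh]$ and $L(Dh)\neq 0$ by Lemma \ref{Lefs:spec}; moreover $Dh$ has no fixed point on the seam, whence $\mathrm{Fix}(Dh)\subset\mathrm{int}(A)\cup\mathrm{int}(A')$. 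Applying Theorem \ref{teo:fin} to $D\mathcal N$ with distinguished element $Dh$ gives a point $p\in\mathrm{Fix}\big((D\mathcal N)_{\mathcal I}\big)\cap\mathrm{Fix}(Dh)$ with finite $D\mathcal N$-orbit; after replacing $p$ by $r(p)$ if necessary (legitimate since $r$ commutes with $D\mathcal N$ and hence preserves both $\mathrm{Fix}(Dh)$ and $\mathrm{Fix}((D\mathcal N)_{\mathcal I})$) I may assume $p\in\mathrm{int}(A)$. As $D\mathcal N$ preserves $\mathrm{int}(A)$ and acts there as $\mathcal N$, the orbit $D\mathcal N\cdot p=\mathcal N\cdot p$ is a finite $\mathcal N$-orbit in the interior, as claimed.

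The hard part will be the $C^{1}$ regularity of the doubled maps across the seam: a generic $C^{1}$ diffeomorphism of $A$ does not double to a $C^{1}$ diffeomorphism of $\mathbb{T}^{2}$, the obstruction being the nonvanishing inward normal derivative of the tangential component along $\partial A$, so that a priori $D\mathcal N\not\subset\mathrm{Diff}^{1}(\mathbb{T}^{2})$. To secure the hypothesis of Theorem \ref{teo:fin} I would first conjugate $\mathcal N$ by a diffeomorphism of $A$ making the action product-like in a collar of $\partial A$, which matches all first-order data at the seam and renders each $D\phi$ of class $C^{1}$ while preserving nilpotency. Should this standardization prove delicate, I would bypass the double and work instead on the universal covering $\mathbb{R}^{2}$ of the open annulus $\mathrm{int}(A)$: there $\mathrm{Fix}(h)$ lifts to a nonempty compact fixed-point set (an annular analogue of Lemma \ref{lim:fix:spec}, using that $h$ acts by $-1$ on $H_{1}$), and running the irrotational-lift construction of Proposition \ref{pro:bffi} in this rank-one setting, after passing to the orientation-preserving subgroup, produces a nilpotent lift in $\mathrm{Diff}^{1}_{+}(\mathbb{R}^{2})$ to which Theorem \ref{cor:plane} applies directly.
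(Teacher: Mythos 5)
Your argument is correct and follows essentially the same route as the paper: Proposition \ref{pro:s1} together with the index-two subgroup handles the boundary case, and for case $(ii)$ you produce, exactly as the paper does, an element that simultaneously swaps the boundary circles and reverses the core, double to $\mathbb{T}^{2}$ where its mapping class is $-Id$ plus a nilpotent part, and invoke Theorem \ref{teo:fin}. The $C^{1}$-gluing difficulty you flag at the seam is precisely the point the paper resolves by citing Parkhe's smooth-gluing theorem \cite{Kiran}, and your fallback via the universal covering of the open annulus is essentially the alternative argument given in the paper's remark following the proof.
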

\vskip5pt

\begin{proof}

Consider the subgroup ${\mathcal N}'$ of ${\mathcal N}$ 
whose elements leave invariant each one of the connected components of 
the boundary $\partial A$ of the annulus $A:= \mathbb{S}^{1}\!\times[0\,,1]$.
Let us consider the proposed two cases.

\vglue5pt

\noindent
{\sc Case $(i)$\,:}  
The group ${\mathcal N}'$ has finite orbits in both connected components of $\partial A$
by Proposition \ref{pro:s1}. Since ${\mathcal N}'$  is a subgroup of ${\mathcal N}$ of index at most $2$,
it follows that ${\mathcal N}$ has finite orbits contained in $\partial A$. 
The result holds true even when the group \,$\mathcal{N}$\, 
is contained in \,$\mathrm{Homeo}(\mathbb{S}^{1}\!\times[0\,,1])$.

\vglue5pt

\noindent
{\sc Case $(ii)$\,:} 
To detect a finite orbit in the interior of the annulus we  consider the double
\,$\mathbb{T}^{2}$\, of the annulus and the double of each element of \,$\mathcal{N}$. 
With this construction we obtain a nilpotent subgroup
\,$G$\, of
\,$\mathrm{Homeo}(\mathbb{T}^{2})$.
We claim that there exists $f_0 \in {\mathcal N} \setminus {\mathcal N}'$ such that 
$L(f_0) \neq 0$. Otherwise $L(f)$ vanishes for any $f \in {\mathcal N} \setminus {\mathcal N}'$
and there exists $f_1 \in {\mathcal N}'$ such that $L(f_1) \neq 0$ by hypothesis. 
We choose any $g_1 \in  {\mathcal N} \setminus {\mathcal N}'$. We obtain
$L(f_1 g_1) \neq 0$ since $f_1 g_1$ changes the orientation in $H_{1} (A, {\mathbb Z})$.
Since $f_1 g_1$ belongs to ${\mathcal N} \setminus {\mathcal N}'$ the result is proved.
The class in the mapping class group of the double $\psi \in G$ of the map $f_0$
is given by the matrix
$$[\psi]=\left[\begin{array}{cc}-1 & n \\0 & -1\end{array}\right] \quad
\textrm{for some} \quad n\in\mathbb{Z}.$$
We have  \,$L(\psi)=4$\, since
\,$L(\psi) = \det (Id - [\psi])$\, by \cite{Brooks-Brown-Pak-Taylor}.

At this point we can not apply Theorem \ref{teo:main5} in a straightforward way
to obtain a finite orbit because the elements of \,$G$\, defined in
the double of the annulus are not necessarily of class \,$C^{1}$ along the boundary of the annulus in \,$\mathbb{T}^{2}$.

Nevertheless,  following another referee's suggestion  we can  overcome this point  using a recent result of 
K. Parkhe in  \cite{Kiran}. He proves (cf. Theorem 5 and Remark 6 in section 2) there exists a 
homeomorphism of the annulus, supported in a small neighborhood of its boundary such that: the conjugate
of the elements of \,$G$\, by this homeomorphism gives a \,$C^{1}$-diffeomorphism that can be glued with
itself  to obtain a \,$C^{1}$-\,diffeomorphisms in the double \,$\mathbb{T}^{2}$ of the annulus.

Of course the \,$C^{1}$-\,diffeomorphism corresponding to the map \,$\psi$\, stays with non-zero Lefschetz number in \,$\mathbb{T}^{2}$. Consequently, since \,$\psi$\, has no fixed point in the boundary of the annulus it follows from Theorem \ref{teo:fin} that \,$\mathcal{N}$\, has a finite orbit in the interior of the annulus.
\end{proof}

\begin{rem}
We also can prove the above result repeating the same arguments presented in the proof of Theorem \ref{teo:fin}.
Let us remark that we use the \,$C^{1}$ differentiability in the proof of such theorem
just to guarantee a global fixed point for
\,$\langle \tilde{G}_{\mathcal{I}}\, , \tilde{\psi} \rangle \subset \mathrm{Diff}^{1}_{+}(\mathbb{R}^{2})$\, via Theorem \ref{cor:plane}. But we can do this in another way.

Let us fix the universal covering map \,$\pi:\mathbb{R}^{2}\rightarrow\mathbb{T}^{2}$\,
such that the restrictions of \,$\pi$\, to the strips
\,$\mathbb{R} \times[0\,,1/2]$\,
and \,$\mathbb{R} \times[1/2\,,1]$\,
are the universal covering maps corresponding to the two copies of the annulus in the double.

The elements of \,$\tilde{G}_{\mathcal{I}}$\,
have a trivial rotation vector with respect to the Borel probability measures invariant by the group \,$G$. Hence, the strips \,$\mathbb{R} \times[0\,,1/2]$\,
and \,$\mathbb{R} \times[1/2\,,1]$\,
are invariant by all \,$\tilde{\phi}\in \tilde{G}_{\mathcal{I}}$.

 On the other hand we know that, in the double, the map \,$\psi$\, has a fixed point in the interior of each copy of the annulus  since \,$L(\psi)\neq0$\,  and \,$\psi$\,
 permutes the connected components of the annulus. Now, consider a lift \,$\tilde{\psi}$\, having a fixed point in the strip \,$\mathbb{R} \times[0\,,1/2]$\,.
 Then, from Lemma \ref{lim:fix:spec} we know that \,$\mathrm{Fix}(\tilde{\psi})$\, is a non-empty compact set. Moreover we have that the
strip \,$\mathbb{R} \times[0\,,1/2]$\, is also invariant by \,$\tilde{\psi}$. Consequently,
we can apply Theorem \ref{cor:plane} to the nilpotent group given by the restriction of the elements of
\,$\langle \tilde{G}_{\mathcal{I}}\, , \tilde{\psi} \rangle $\, to the strip
 \,$\mathbb{R} \times(0\,,1/2)$\, obtaining a global fixed point for the group
\,$\langle \tilde{G}_{\mathcal{I}}\, , \tilde{\psi} \rangle $\,. Following the end of the proof of Theorem \ref{teo:fin} we conclude the existence of a finite orbit for the
group \,$G$\, and consequently, the group \,$\mathcal{N}$\, has a finite orbit in the interior of the annulus.

\end{rem}
\vskip4pt

 In what follows let  \,$\Pi:\mathbb{T}^{2} \rightarrow \mathbb{K}^{2}$\, be the
\,$2$-fold orientation covering map of the Klein bottle \,$\mathbb{K}^{2}$\, by \,$\mathbb{T}^{2}$\, and let us denote by \,$\sigma$\, the non-trivial lift  of the identity map by \,$\Pi$.
Given a subgroup  \,$G$ of $\mathrm{Homeo(\mathbb{K}^{2})}$, let us denote by
\,$\widehat{G} \subset \mathrm{Homeo(\mathbb{T}^{2})}$\, the subgroup of all lifts of elements of
\,$G$.

 Given an element \,$\phi \in G$\, its distinct lifts to \,$\mathbb{T}^{2}$\, by \,$\Pi$\, are
\,$\tilde{\phi}$\, and \,$\sigma \circ \tilde{\phi}$. Hence we have that the lifts of
\,$\phi \in G$\, by \,$\Pi$\, commute with the covering transformations since the
lifts
\,$\tilde{\phi} \circ \sigma$\, and \,$\sigma \circ \tilde{\phi}$\, are different from
\,$\tilde{\phi}$ and then equal.
This property implies that if
\,$G$\, is a nilpotent group with nilpotent class \,$n\in\mathbb{Z}^{+}$\, then
\,$\widehat{G}$\, is also a nilpotent group, with 
nilpotent class at most $n+1$.

 Moreover, if \,$\psi \in \mathrm{Homeo}(\mathbb{K}^{2})$\,  then
the Lefschetz numbers of the lifts of \,$\psi$\, by \,$\Pi$\, are
$2 L(\psi)$ and $0$.
Consequently, if \,$G$\, has an element with non-trivial Lefschetz number
then \,$\widehat{G}$\, has the same property and we obtain
the following corollary of Theorem \ref{teo:main5}.

\vskip10pt
\begin{cor}
Let \,$G$\, be a nilpotent subgroup of \,$\mathrm{Diff}^{1}({\mathbb K}^{2})$\,.
If \,$G$\, has some element whose  Lefschetz number
is different from zero
then \,$G$\, has a finite orbit.
\end{cor}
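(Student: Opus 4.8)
The plan is to lift the whole problem to the orientation double cover $\mathbb{T}^2$ and invoke Theorem~\ref{teo:main5} there. By the discussion preceding this corollary, the group $\widehat{G}$ of all lifts by $\Pi$ of elements of $G$ is nilpotent (of class at most $n+1$ if $G$ has class $n$). Moreover, since $\Pi:\mathbb{T}^2\to\mathbb{K}^2$ is a covering map between smooth surfaces, hence a local diffeomorphism, every lift of a $C^1$-diffeomorphism of $\mathbb{K}^2$ is a $C^1$-diffeomorphism of $\mathbb{T}^2$; thus $\widehat{G}$ is a nilpotent subgroup of $\mathrm{Diff}^1(\mathbb{T}^2)$.

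First I would check that $\widehat{G}$ satisfies the hypothesis of Theorem~\ref{teo:main5}. Choose $\psi\in G$ with $L(\psi)\neq 0$. Its two lifts by $\Pi$ have Lefschetz numbers $2L(\psi)$ and $0$, as already noted. Picking the lift $\widehat{\psi}$ with $L(\widehat{\psi})=2L(\psi)\neq 0$, we see that $\widehat{G}$ contains an element of nonzero Lefschetz number, so Theorem~\ref{teo:main5} applies and yields a finite $\widehat{G}$-orbit $\mathcal{O}\subset\mathbb{T}^2$.

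It remains to push this orbit down to $\mathbb{K}^2$. Each $\widehat{\phi}\in\widehat{G}$ is a lift of some $\phi\in G$, so $\Pi\circ\widehat{\phi}=\phi\circ\Pi$; consequently, for every $\phi\in G$ and every $p\in\mathcal{O}$ we have $\phi(\Pi(p))=\Pi(\widehat{\phi}(p))\in\Pi(\mathcal{O})$, so $\Pi(\mathcal{O})$ is a $G$-invariant set. Being the image of a finite set under the continuous map $\Pi$, the set $\Pi(\mathcal{O})$ is finite; hence the $G$-orbit of any $q\in\Pi(\mathcal{O})$ is contained in $\Pi(\mathcal{O})$ and is therefore finite. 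This produces the desired finite orbit of $G$.

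The argument is essentially a bookkeeping reduction, so I do not expect a genuine obstacle; the only points needing care are that the lifts are of class $C^1$ (which rests on $\Pi$ being a local diffeomorphism) and that one selects the lift with Lefschetz number $2L(\psi)$ rather than $0$. All the substantive work—the nilpotency of $\widehat{G}$, the Lefschetz-number computation for the lifts, and of course Theorem~\ref{teo:main5} itself—has already been carried out.
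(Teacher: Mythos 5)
Your proof is correct and follows essentially the same route as the paper: pass to the orientation double cover, use the nilpotency of the lifted group $\widehat{G}$ and the fact that one lift of $\psi$ has Lefschetz number $2L(\psi)\neq 0$, apply Theorem~\ref{teo:main5} there, and project the finite orbit back to $\mathbb{K}^{2}$. The only difference is that you spell out the (routine) final projection step, which the paper leaves implicit.
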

\vskip5pt

Repeating for the M\"obius strip the above arguments presented in this section for the annulus and for the Klein bottle we conclude the following corollary of  Theorem \ref{teor:annulus}.

\vskip10pt
\begin{cor}
Let \,$\mathcal{N}$\, be a nilpotent subgroup of the group of all \,$C^{1}$-diffeomorphisms of the compact
M\"obius strip. If
\,$\mathcal{N}$\, has some element \,$f$\, with \,$L(f)\neq 0$\, then
\,$\mathcal{N}$\, has  finite orbits in the boundary and in the interior of the M\"obius strip.
\end{cor}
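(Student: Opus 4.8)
The plan is to reduce the statement to the annulus and circle cases already established, exploiting the fact that the compact M\"obius strip \,$M$\, is doubly covered by the compact annulus \,$A:=\mathbb{S}^{1}\!\times[0\,,1]$\, through its orientation covering, exactly as the Klein bottle is doubly covered by \,$\mathbb{T}^{2}$. First I would fix the \,$2$-fold orientation covering \,$\Pi:A\to M$\, and let \,$\sigma$\, be the non-trivial lift of the identity map by \,$\Pi$. Given the nilpotent group \,$\mathcal{N}\subset\mathrm{Diff}^{1}(M)$\,, I would introduce the subgroup \,$\widehat{\mathcal{N}}\subset\mathrm{Diff}^{1}(A)$\, of all lifts of its elements; since \,$\Pi$\, is a smooth covering of manifolds with boundary, these lifts are genuinely \,$C^{1}$\, up to \,$\partial A$. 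Repeating the argument used above for the Klein bottle, the two lifts of any \,$\phi\in\mathcal{N}$\, are \,$\tilde{\phi}$\, and \,$\sigma\circ\tilde{\phi}$\,, and each lift commutes with \,$\sigma$; hence \,$\langle\sigma\rangle$\, is central in \,$\widehat{\mathcal{N}}$\, with quotient isomorphic to \,$\mathcal{N}$\,, so \,$\widehat{\mathcal{N}}$\, is nilpotent, of class at most \,$n+1$\, if \,$\mathcal{N}$\, has class \,$n$.

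The key bookkeeping step concerns the Lefschetz hypothesis. As for the circle and the annulus, a homeomorphism of \,$M$\, (resp.\ of \,$A$) has non-zero Lefschetz number if and only if it reverses the orientation of the generator of the first homology group, which here is infinite cyclic; this follows from \,$L=\det(Id-[\,\cdot\,])$\, together with \,$H_{1}\cong\mathbb{Z}$. Let \,$f\in\mathcal{N}$\, with \,$L(f)\neq0$\,, so that \,$f_{*}$\, acts by \,$-1$\, on \,$H_{1}(M)\cong\mathbb{Z}$. From this I would extract two consequences. On the one hand, the boundary circle \,$\partial M$\, is freely homotopic to twice the core, so \,$f$\, restricts to an orientation-reversing homeomorphism of \,$\partial M\cong\mathbb{S}^{1}$. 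On the other hand, the map \,$\Pi_{*}:H_{1}(A)\to H_{1}(M)$\, induced by the covering is injective and natural with respect to the relation \,$\Pi\circ\tilde{f}=f\circ\Pi$\,, so any lift \,$\tilde{f}$\, of \,$f$\, also acts by \,$-1$\, on \,$H_{1}(A)\cong\mathbb{Z}$\, and therefore satisfies \,$L(\tilde{f})\neq0$.

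I would then conclude separately on the boundary and in the interior. For the boundary, the restriction of \,$\mathcal{N}$\, to \,$\partial M$\, is a nilpotent group of circle homeomorphisms containing the orientation-reversing element \,$f|_{\partial M}$\,, and Proposition \ref{pro:s1} yields a finite orbit inside \,$\partial M$. For the interior, I would apply Theorem \ref{teor:annulus}$(ii)$ to the nilpotent group \,$\widehat{\mathcal{N}}\subset\mathrm{Diff}^{1}(A)$: it contains the element \,$\tilde{f}$\, with \,$L(\tilde{f})\neq0$\,, and, crucially, it contains \,$\sigma$\,, which is a lift of the identity and interchanges the two boundary circles of \,$A$; hence no connected component of \,$\partial A$\, is \,$\widehat{\mathcal{N}}$-invariant. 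Theorem \ref{teor:annulus}$(ii)$ then produces a finite \,$\widehat{\mathcal{N}}$-orbit \,$O$\, in the interior of \,$A$\,, and its projection \,$\Pi(O)$\, is a finite \,$\mathcal{N}$-invariant subset of the interior of \,$M$\,, hence contains a finite \,$\mathcal{N}$-orbit. The main obstacle is precisely this homological bookkeeping: one must check that the single Lefschetz hypothesis on \,$M$\, simultaneously forces orientation reversal on the connected boundary circle \,$\partial M$\, and non-zero Lefschetz number for the lift on \,$A$\,, and, most delicately, that membership of \,$\sigma$\, in \,$\widehat{\mathcal{N}}$\, is what guarantees the non-invariance of the annulus boundary components needed to invoke case \,$(ii)$.
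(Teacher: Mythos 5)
Your proposal is correct and follows essentially the same route as the paper: finite boundary orbits come from Proposition \ref{pro:s1} applied to the single boundary circle, and interior orbits come from lifting \,$\mathcal{N}$\, to the orientation double cover by the annulus and invoking Theorem \ref{teor:annulus}$(ii)$, using that \,$\sigma$\, (equivalently, one of the two lifts of \,$f$) swaps the boundary components so that neither is invariant. Your extra homological checks (that \,$f|_{\partial M}$\, is orientation-reversing and that both lifts of \,$f$\, have non-zero Lefschetz number on the annulus) are correct details that the paper leaves implicit.
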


The $C^{1}$ hypothesis is not necessary to find finite orbits in the boundary
of the M\"obius strip. Such a boundary has one connected component that is
homeomorphic to a circle. Hence there exist finite orbits in the boundary by Proposition \ref{pro:s1}.
Finally let us remind the reader
that the non-trivial covering transformation \,$\sigma$\, associated to the \,$2$-fold
orientation covering  of the compact M\"obius strip by the compact annulus,
permutes the connected components of the
boundary of the annulus. In that case,  one of the lifts
\,$\tilde{\psi}\,,\sigma \circ \tilde{\psi}$\, permutes such components.
There exists an interior finite orbit by Theorem \ref{teor:annulus}.

\vglue20pt
\appendix
\section{Proof of Lemma \ref{lem:nmcg}}

Let \,${\mathcal G}$\, be a nilpotent subgroup of \,$\mathrm{GL}(2\,,{\mathbb Z})$.
Consider the natural mapping \,$\tau: \mathrm{GL}(2\,,{\mathbb C}) \to \mathrm{PGL}(2\,,{\mathbb C})$.
We denote \,$G_{+}=\tau({\mathcal G} \cap \mathrm{SL}(2\,,{\mathbb Z}))$\,
and \,$G=\tau({\mathcal G})$.
It suffices to show that either \,$G$\, is cyclic or
\,${\mathcal G}$\, is conjugated to \,${\mathcal H}$.

If \,$G_{+}$\, is trivial then \,$G$\, is a group of cardinality at most \,$2$.
We suppose from now on that \,$G_{+}$\, is non-trivial.

The group \,$G_{+}$\, is a nilpotent Fuchsian group. Since its center is non-trivial all
elements share the same fixed point set. Hence \,$G_{+}$\, is a cyclic group 
\,$\langle \alpha \rangle$\,
(cf. \cite[Theorems 2.3.2 and 2.3.5]{Katok-fuchs} for further details).

Since \,$G$\, is nilpotent, its subset \,$\mathrm{Tor}(G)$\, of finite order elements is a normal subgroup
of \,$G$\, \cite[Theorem 16.2.7]{Karga}.
Suppose that   \,$\mathrm{Tor}(G)$\,
is non-trivial. We claim that \,$G$\, is finite.
Since every non-trivial normal subgroup of a nilpotent group \,$H$\, contains a non trivial element of
the center of \,$H$, the group \,$\mathrm{Tor}(G) \cap Z(G)$\, is non-trivial 
\cite[Theorem 16.2.3]{Karga}.
If \,$\mathrm{Tor}(G) \cap G_{+} \neq \{Id\}$\, then
\,$G_{+}$\, is finite and so is \,$G$. In the remaining case there exists a non-trivial
element \,$\beta \in \mathrm{Tor}(G) \cap Z(G)$\, that necessarily belongs to \,$G \setminus G_{+}$.
Consider an element \,$B \in {\mathcal G}$\, such that \,$\tau (B)=\beta$.
The matrix \,$B$\, has finite order; thus its eigenvalues are roots of the unit.
The matrix \,$B$\, does not have a non-real eigenvalue \,$\lambda$\, since otherwise
\,$\det (B)= \lambda \overline{\lambda} =1$\, and this contradicts \,$\tau (B) \not \in G_{+}$.
Hence the eigenvalues of \,$B$\, are necessarily \,$1$\, and \,$-1$\, and we can diagonalize
\,$B$\, up to conjugation by a matrix in \,$\mathrm{GL}(2\,,{\mathbb Q})$.
Since \,$\tau (B) \in Z(G)$\, we obtain either \,$[\,C\hskip1pt,B\,]=Id$\, or \,$[\,C\hskip1pt,B\,]=-Id$\, for any
\,$C \in {\mathcal G}$.
We denote \,${\mathcal G}_{1}= \big\{ C \in {\mathcal G} \ ; \ [\,C\hskip1pt,B\,]=Id \big\}$\,;
it is a normal subgroup of \,${\mathcal G}$\, of index at most \,$2$.
The eigenvalue associated to an eigenvector in
\,${\mathbb Q}^{2} \setminus \{(0\,,0)\}$\, of a matrix \,$E$\, in \,$\mathrm{GL}(2\,,{\mathbb Z})$\,
is always $1$ or $-1$. We deduce that
\,$C$\, is diagonal in the base diagonalizing \,$B$\,
with entries in \,$\{1\,,-1\}$\, for any \,$C \in {\mathcal G}_{1}$.
Hence the cardinality of \,${\mathcal G}_{1}$\, is less or equal than \,$4$\, and \,$G$\, is finite.

Since
\,$G_{+}$\, is a normal subgroup of \,$G$,
there exists a non-trivial element \,$\gamma \in G_{+} \cap Z(G)$.
Suppose \,$\gamma$\, is parabolic (i.e.  $\#(\mathrm{Fix}(\gamma))=1$).
The restriction of \,$\gamma$\, to the circle \,${\mathbb R} \cup \{\infty\}$\, has exactly a fixed
point and all the remaining orbits are infinite.
Let us show that \,$\gamma$\, does not commute with any orientation-reversing homeomorphism \,$\eta$\,
of the circle \,${\mathbb R} \cup \{\infty\}$\, by contradiction.
Indeed \,$\gamma \circ \eta = \eta \circ \gamma$\, implies \,$\gamma (\mathrm{Fix}(\eta)) =  \mathrm{Fix}(\eta)$.
Moreover \,$\mathrm{Fix}(\eta)$\, contains exactly two points if \,$\eta_{|{\mathbb R} \cup \{\infty\}}$\, is
orientation-reversing. We deduce that the two points of \,$\mathrm{Fix}(\eta)$\, 
have finite orbits for \,$\gamma$\,,
obtaining a contradiction since there is just one point whose orbit by \,$\gamma$\,
is finite. Therefore
the groups \,$G_{+}$\,
and \,$G$\, coincide and then \,$G$\, is cyclic.

Suppose that \,$\gamma$\, is hyperbolic, it has \,$2$\, fixed points
in \,${\mathbb R} \cup \{\infty\}$\, and no other finite orbit.
We already know that \,$\mathrm{Fix}(\gamma) = \mathrm{Fix}(\eta)$\,
for any \,$\eta \in G_{+} \setminus \{Id\}$. Given \,$\eta \in G \setminus G_{+}$\,
the properties  \,$\gamma (\mathrm{Fix}(\eta)) = \mathrm{Fix}(\eta)$\, and
\,$\sharp (\mathrm{Fix}(\eta))=2$\, imply \,$\mathrm{Fix}(\eta) \subset \mathrm{Fix}(\gamma)$\, and then
\,$\mathrm{Fix}(\eta) = \mathrm{Fix}(\gamma)$. We obtain 
\,$\mathrm{Fix}(\gamma) = \mathrm{Fix}(\eta)$\,
for any \,$\eta \in G \setminus \{Id\}$.
Fix \,$p_{0} \in \mathrm{Fix}(\gamma)$\,,
we define 
\,$\zeta: G \to {\mathbb R}^{*}$\, and $|\zeta|: G \to {\mathbb R}^{+}$ as
\,$\zeta (\eta) = \eta'(p_{0})$ and $|\zeta| (\eta) = |\eta'(p_{0})|$ respectively.
The map \,$\zeta$\, is injective since a M\"obius transformation 
that has two different fixed points (the elements of \,$\mathrm{Fix}(\gamma)$) and whose 
multiplicator at one of them is equal to \,$1$, it is the identity map.
Since \,$\zeta (G_{+})$\, is a cyclic subgroup of \,${\mathbb R}^{+}$\, we conclude that
\,$|\zeta| (G)$\, is a discrete closed subgroup of \,${\mathbb R}^{+}$. 
In particular \,$|\zeta| (G)$\, is cyclic. Then there exists 
\,$\eta_0 \in G$\, such that \,$|\eta_{0}'(p_{0})|$\, generates \,$|\zeta| (G)$.
We claim \,$G = \langle \eta_0 \rangle$\, and in particular that \,$G$\, is cyclic. 
Otherwise there exists an element \,$\eta_1 \in G$\,
such that \,$\zeta (\eta_1) = -1$. Then \,$\eta_1$\, is an element of order \,$2$\, of \,$G$\, since 
\,$\zeta$\, is injective.  
Since \,$\eta_1 \in \mathrm{Tor}(G) \setminus \{Id\}$,   
\,$G$\, is finite. 
This contradicts that \,$\gamma$\, is hyperbolic.

The unique remaining case corresponds to the situation where \,$\gamma$\, is of finite order.
The group \,$G$\, is a finite nilpotent group of  M\"obius transformations.
The finite groups of orientation-preserving homeomorphisms of the Riemann sphere are isomorphic to 
a cyclic group \,$C_n$\,, a dihedral group \,$D_n$\,, \,$A_4$\,, \,$S_4$\, or \,$A_5$\, \cite[Theorem 2.6.1]{Shurman}.
The groups \,$A_4$\,, \,$S_4$\, or \,$A_5$\, are not nilpotent. Moreover \,$D_n$\, is nilpotent if and only if \,$n$\, is 
a power of \,$2$. Thus 
if \,$G$\, is not cyclic then it is a dihedral group \,$D_{2^{m}}$\, with \,$2^{m+1}$\, elements
for some \,$m \in {\mathbb N}$.
It is easy to see that the periodic elements of
\,$\tau (\mathrm{GL}(2\,,{\mathbb Z}))$\, have order \,$1$\,, $2$\, or \,$3$.
Since \,$D_{2^{m}}$\, contains a cyclic group with \,$2^{m}$\, elements we deduce that
\,$G$\, is the group \,$D_{2}$.
Let \,$A$\, be a matrix such that \,$\tau(A) = \alpha$.
Since \,$\alpha$\, belongs to \,$G_{+}$\, and has order \,$2$\,, we deduce
\,$A \not \in \{Id \,, - Id\}$\,, \,$A^{2} \in \{Id\,, - Id\}$\, and \,$\det (A)=1$.
These properties imply
\,$A^{2}=-Id$\, and \,$\mathrm{spec}(A)=\{i\,,-i\}$.
The kernel of \,$\tau_{|{\mathcal G}}$\, is equal to \,$\{Id \,, -Id\}$\,,
in particular \,${\mathcal G}$\, has \,$8$\, elements.
Consider a matrix \,$B \in {\mathcal G}$\, such that \,$\tau (B) \not \in G_{+}$.
Analogously as in the fourth paragraph the matrix \,$B$\,
satisfies \,$\mathrm{spec}(B) =\{1\,,-1\}$\, and it is diagonalizable by
a change of coordinates in \,$\mathrm{GL}(2\,,{\mathbb Q})$.
Moreover \,$A$\, does not commute with \,$B$\, since otherwise
\,$\mathrm{spec}(A) \subset {\mathbb R}$.
Since \,$[\,A\,,B\,]=-Id$\, we obtain\,:
\[ A =
\left(
\begin{array}{cc}
0 & a \\
-a^{-1} & 0 \\
\end{array}
\right) \quad \text{and} \quad
B = \left(
\begin{array}{rr}
1 & 0 \\
0 & -1 \\
\end{array}
\right) \quad  \text{for some \,$a \in {\mathbb Q}$.} \]
Up to a further change of coordinates we can suppose that
\,$a=1$. This implies \,${\mathcal G}={\mathcal H}$.
Since \,$A$\, has order \,$4$, \,$B$\, has order \,$2$\, and \,$B A B^{-1} = A^{-1}$,
the group \,${\mathcal H}$\, is isomorphic to \,$D_{4}$.

\end{document}